\newtheorem{theorem}{Theorem}[section]
\newtheorem{corollary}[theorem]{Corollary}
\newtheorem{lemma}[theorem]{Lemma}
\newtheorem{proposition}[theorem]{Proposition}
\newtheorem{claim}[theorem]{Claim}
\theoremstyle{definition}
\newtheorem{question}[theorem]{Question}
\newtheorem{remark}[theorem]{Remark}
\newtheorem{example}[theorem]{Example}
\numberwithin{equation}{section}
\newcommand{\Rrr}{\mathbb R}
\newcommand{\Zzz}{\mathbb Z}
\DeclareMathOperator{\Inv}{Inv}
\DeclareMathOperator{\head}{head}
\DeclareMathOperator{\tail}{tail}
\newcommand{\SSSS}{{\mathfrak S}}
\newcommand{\AS}{\widetilde{\mathfrak S}}
\newcommand{\ffrac}[2]{#1/#2}
\newcommand\CB{\ensuremath{\mathit{CB}}}
\newcommand{\mB}{{\mathcal B}}
\newcommand{\mP}{{\mathcal P}}
\begin{document}

\title{Number of cycles in the graph
of $312$-avoiding permutations}
\author{Richard Ehrenborg,
            Sergey Kitaev\thanks{The corresponding author: phone: +44141 5483473; fax: +44141 5484523; email: sergey.kitaev@cis.strath.ac.uk}\ \ 
            and
            Einar Steingr\'{\i}msson}
\date{} 
\maketitle
\thispagestyle{empty}

\begin{abstract}
The graph of overlapping permutations is defined in a way analogous
to the De Bruijn graph on strings of symbols.
That is, for every permutation $\pi = \pi_{1} \pi_{2} \cdots \pi_{n+1}$
there is a directed edge from
the standardization of
$\pi_{1} \pi_{2} \cdots \pi_{n}$
to the standardization of
$\pi_{2} \pi_{3} \cdots \pi_{n+1}$.
We give a formula for the number of cycles of length $d$
in the subgraph of overlapping $312$-avoiding permutations.
Using this we also give a refinement of the enumeration of
$312$-avoiding affine permutations
and point out some open problems on this graph,
which so far has been little studied.
\end{abstract}

\section{Introduction and preliminaries}

One of the classical objects in combinatorics is the {\em De Bruijn graph}.  This is the directed graph on vertex set $\{0,1, \ldots, q-1\}^{n}$, the set of all strings of length $n$ over an alphabet of size~$q$, whose directed edges go from each vertex $x_{1} \cdots x_{n}$ to each vertex $x_{2} \cdots x_{n+1}$.  That is, there is a directed edge from
the string $\mathbf{x}$ to
the string $\mathbf{y}$ if and only if the last $n-1$ coordinates of $\mathbf{x}$ agree with the first $n-1$ coordinates of $\mathbf{y}$.

The De Bruijn graph has been much studied, especially in connection with combinatorics on words, and one of its well known
properties
(see for instance~\cite[page~126]{Golomb})
is the fact that its number of directed cycles of length~$d$, for $d \leq n$, is given by
\begin{equation}
  \frac{1}{d} \sum_{e | d} \mu\left(\ffrac{d}{e}\right) q^{e},
  \label{equation_de_Bruijn}
\end{equation}
where the sum is over all divisors $e$ of length $d$, and where $\mu$ denotes the number theoretic {\em M\"obius function}.  Recall that $\mu(n)$ is $(-1)^{k}$ if $n$ is a product of $k$ distinct primes and is zero otherwise.  

A natural variation on the De Bruijn graphs is obtained by replacing words over an alphabet by permutations of the set of integers $\{1,2,\ldots,n\}$, where the overlapping condition determining directed edges in De Bruijn graphs is replaced by the condition that the head and tail of two permutations have the same \emph{standardization}, that is, that their letters appear in the same order of size.
This {\em graph of overlapping permutations}, denoted $G(n)$,
has a directed edge for each permutation
$\pi \in \SSSS_{n+1}$
from the standardization of $\pi_{1} \pi_{2} \cdots \pi_{n}$
to the standardization of $\pi_{2} \pi_{3} \cdots \pi_{n+1}$.
As an example,
there is a directed edge
from $2341$ to $3412$ in $G(4)$
labeled $24513$, since
the standardizations
of $2451$ and $4513$
are $2341$ and $3412$, respectively.
In fact, between these two vertices
there is another directed edge
labeled $34512$.
The simple case of $n=2$ is illustrated in Figure~\ref{figure_1}.  Note that, apart from the path and cycle graphs mentioned in Section~\ref{section_compositions}, all graphs in this paper are directed, although we do not  explicitly refer to them as directed graphs.

The graph $G(n)$ appeared in~\cite{CDG} in connection with
{\em universal cycles on permutations}.  It has also appeared
in~\cite{Ehrenborg_Kitaev_Perry}, where it was used as a tool in determining the asymptotic behavior of consecutive pattern avoidance, and in \cite{AK}, where it is called the {\em graph of overlapping patterns} (see also \cite[Section 5.6]{Kitaev}).

What is the number of directed cycles in this graph $G(n)$,
that is,
the analogue to the question for which
Equation~\eqref{equation_de_Bruijn} is the answer.
This is a natural question which does
not seem to have been studied so far.
We have not been able to solve that problem (and we do not recognize the associated number sequences).  We do here, however, solve that problem when the graph is restricted to permutations of length~$n$ avoiding the pattern 312, that is, permutations containing no three letters the first of which is the largest and the second of which is the smallest.
We prove in Theorem~\ref{theorem_number_of_cycles}
that the number of directed cycles of length~$d$
in the restriction of the graph
is
\begin{equation}
  \frac{1}{d} \sum_{{e|d}} \mu\left(\ffrac{d}{e}\right)\binom{2e}{e},
  \label{main_formula}
\end{equation}
for $d$ not exceeding $n$.  
Note the similarity between this and the expression
in Equation~\eqref{equation_de_Bruijn}: the power $q^{e}$
in~\eqref{equation_de_Bruijn} is replaced here by
the central binomial coefficient $\binom{2e}{e}$.

It is easy to see, due to straightforward symmetries, that permutations avoiding a particular one of the patterns 132, 213 and 231 yield a graph isomorphic to the one for 312, which is the representative we have chosen.  It is also easy to see that permutations avoiding 123 (or, equivalently, 321) give rise to a nonisomorphic graph.  For this latter case we have no solution for the number of cycles, and we do not recognize the number sequences counting the cycles in that graph.

Using similar techniques, we prove that the number of $312$-avoiding
affine permutations in $\AS_{d}$ with $k$ cut points
is given by the binomial coefficient~$\binom{2d-k-1}{d-1}$.
This refines a result of Crites \cite{Crites} who showed that the number of $312$-avoiding affine permutations is $\binom{2d-1}{d-1}$.  As a corollary to our results we show that each affine permutation has a cut point  or is, in other words, decomposable.

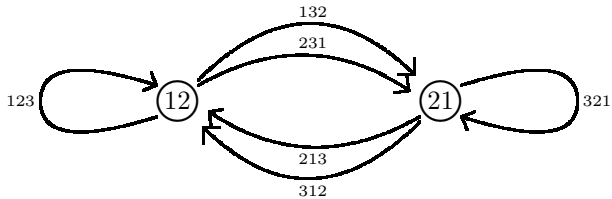
\begin{figure}
\setlength{\unitlength}{0.7 mm}
\begin{center}
  \begin{picture}(100,40)(-50,-20) \thicklines

\newcommand{\vertexcirclesmall}{\circle{7.5}}
\put(-25,0){\vertexcirclesmall}
\put(25,0){\vertexcirclesmall}
\put(-2.5,-1.4){
\put(-25,0){\small $12$}
\put(25,0){\small $21$}
}

%%% Arrow 132
\qbezier(-21,4)(0,25)(20,5)
\put(20,5){\qbezier(0,0)(0,1.5)(0,3)
               \qbezier(0,0)(-1.5,0)(-3,0)}
\put(-2,16){\tiny $132$}

%%% Arrow 231
\qbezier(-21,3)(0,15)(19,2)
\put(19,2){\qbezier(0,0)(-0.2,1.32)(-0.4,2.64)
                 \qbezier(0,0)(-1.32,-0.2)(-2.64,-0.4)}
\put(-2,10){\tiny $231$}

%%% Arrow 213
\qbezier(21,-3)(0,-15)(-19,-2)
\put(-19,-2){\qbezier(0,0)(0.2,-1.32)(0.4,-2.64)
                 \qbezier(0,0)(1.32,0.2)(2.64,0.4)}
\put(-2,-12){\tiny $213$}

%%% Arrow 312
\qbezier(21,-4)(0,-25)(-20,-5)
\put(-20,-5){\qbezier(0,0)(0,-1.5)(0,-3)
               \qbezier(0,0)(1.5,0)(3,0)}
\put(-2,-18){\tiny $312$}

%%% Arrow 123
\qbezier(-29,-3)(-51,-10)(-51,0)
\qbezier(-29,3)(-51,10)(-51,0)
\put(-57.5,-1){\tiny $123$}
\put(-29,3){\qbezier(0,0)(-0.5,1.2)(-1.0,2.4)
                  \qbezier(0,0)(-1.2,-0.5)(-2.4,-1.0)}

%%% Arrow 321
\qbezier(29,3)(51,10)(51,0)
\qbezier(29,-3)(51,-10)(51,0)
\put(52,-1){\tiny $321$}
\put(29,-3){\qbezier(0,0)(0.5,-1.2)(1.0,-2.4)
                  \qbezier(0,0)(1.2,0.5)(2.4,1.0)}
              \end{picture}
            \end{center}
            \caption{The graph $G(2)$ of overlapping permutations.}
            \label{figure_1}
          \end{figure}

The connection between cycles in the graph of overlapping permutations and affine permutations goes through bi-infinite sequences.  A bi-infinite sequence $(\ldots, f(-1), f(0), f(1), f(2), \ldots)$ of distinct real numbers yields a bi-infinite walk where the $i$th edge is given by the standardization of $f=(f(i),f(i+1), \ldots, f(i+n))$, that is, the unique permutation of $\{1,2,\ldots,n+1\}$ whose letters appear in the same order of size as the numbers in $f$ .  This walk is a closed walk of length~$d$ if the sequence is periodic, in the sense that $f(i) < f(j)$ if and only if $f(i+d) < f(j+d)$.  Thus, infinite $d$-periodic sequences correspond to $d$-cycles.

%\enlargethispage{10mm} %To avoid a single line spilling over to next page

The paper is organized as follows.  In Section \ref{sec2} we introduce some  definitions related to pattern avoidance, affine permutations and bi-infinite sequences.  The last of these play an important role in the proof of the main result, as do ordinary and cyclic compositions of an integer, which are introduced in Section~\ref{section_compositions}.  In Section~\ref{321-affine-perms} we give results on the number of affine 312-avoiding permutations with a given number of cut points and show that every such permutation does have a cut point.  In Section~\ref{sec-graph} we present the main result, about the number of $d$-cycles in $G(n,312)$ and subsequently give a bijection that proves this, in Sections~\ref{sec-bijection} and~\ref{sec-inv-bijection}.  Finally, in Section~\ref{sec-open}, we list several open problems in this area.

\section{Pattern-avoiding permutations, affine permutations
and bi-infinite sequences}\label{sec2}

We first introduce some formal definitions that are needed later on. 

For a permutation $x = x_{1} \cdots x_{n}$ consisting of distinct real numbers, let $\Pi(x)$ denote the {\em standardization of $x$}, also known as the {\em reduced form of $x$}, that is, the unique permutation $\pi=\pi_1\cdots\pi_n$ in the symmetric group $\SSSS_{n}$ whose elements have the same relative order as those in $x$. In other words, $x_{i} < x_{j}$ if and only if $\pi_{i} < \pi_{j}$ for all $1 \leq i < j \leq n$ and $\pi$ is built on the set $\{1,2,\ldots,n\}$. For example, $\Pi(3(-2)02)=4123$.

The graph of overlapping permutations $G(n)$ has the elements of the symmetric group $\SSSS_{n}$ as its vertex set and for every permutation $\sigma = \sigma_{1} \cdots \sigma_{n+1}$ in $\SSSS_{n+1}$ there is a directed edge from $\Pi(\sigma_{1} \cdots \sigma_{n})$ to $\Pi(\sigma_{2}\cdots \sigma_{n+1})$.

A permutation $\pi=\pi_1\pi_2\cdots\pi_n \in \SSSS_{n}$ {\em avoids} a permutation $\tau \in \SSSS_{k}$ if there are no integers $1 \leq i_{1} < i_{2} < \cdots < i_{k} \leq n$ such that $\Pi(\pi_{i_{1}}\pi_{i_{2}} \cdots \pi_{i_{n}}) = \tau$. In this context, $\tau$ is called a {\em pattern} and we say that $\pi$ avoids the pattern $\tau$.  Let $\SSSS_{n}(\tau)$ denote the set of $\tau$-avoiding permutations in $\SSSS_{n}$.  Especially, we are interested in $312$-avoiding permutations, which are those that have no indices $i < j < k$ such that $\pi_j < \pi_k < \pi_i$.  It is well known that the number of $312$-avoiding permutations in $\SSSS_{n}$ is given by the $n$th {\em Catalan number} $C_{n}=\frac{1}{n+1} \binom{2n}{n}$.

A {\em cut point} of a permutation $\pi \in \SSSS_{n}$ is an index $j$ with $1 \leq j \leq n-1$ such that for all $i$ and $k$ satisfying $1 \leq i \leq j < k \leq n$ we have that $\pi_i < \pi_k$.  The cut points split a permutation into \emph{components}, each ending at a cut point.  A permutation without cut points is said to be {\em indecomposable} (or, sometimes, {\em irreducible}).  As an example, the permutation 31246758 has three cut points, namely 3, 4, and 7, and components 312, 4, 675 and 8, wheres 2413 is indecomposable.  The following proposition is well known
(see, for instance, \cite{Claesson_Kitaev}),
and it is easy to prove, e.g.\ using the following argument.  Every $312$-avoiding permutation is of the form $A1B$, where each element of $B$ is larger than every element of $A$, which implies that a $312$-avoiding permutation is indecomposable precisely when $B$ is empty.  Such permutations with $B$ empty directly correspond bijectively to the 312-avoiding 
permutations $A$ of length $n-1$, which leads to the following result.
\begin{proposition}
The number of $312$-avoiding indecomposable permutations in $\SSSS_{n}$
is given by the Catalan number $C_{n-1}$.
\label{proposition_indecomposable}
\end{proposition}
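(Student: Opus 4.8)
The plan is to exploit the structural decomposition of $312$-avoiding permutations already noted in the excerpt and to reduce the count of indecomposable ones to the full Catalan count at one smaller size. First I would fix the location of the letter $1$ and write an arbitrary $\pi \in \SSSS_{n}(312)$ as $\pi = A\,1\,B$, where $A$ is the (possibly empty) prefix preceding $1$ and $B$ is the (possibly empty) suffix following it. Avoidance of $312$ forces every letter of $B$ to exceed every letter of $A$: if some $a$ in $A$ and some $b$ in $B$ satisfied $a > b$, then $a\,1\,b$ would be an occurrence of $312$. Consequently $\{1\} \cup A = \{1, 2, \ldots, |A|+1\}$, the set of the $|A|+1$ smallest values.

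Next I would characterize indecomposability through the position of $1$. Since $\{1\}\cup A$ is exactly the initial segment of values $\{1,\ldots,|A|+1\}$, the index $|A|+1$ (the position of $1$) is a cut point whenever it is at most $n-1$, that is, whenever $B$ is nonempty; hence $\pi$ is decomposable as soon as $B \neq \emptyset$. Conversely, if $B$ is empty then $1$ sits in the last position, so $1 \notin \{\pi_{1},\ldots,\pi_{j}\}$ for every $j \leq n-1$, and no initial segment can equal $\{1,\ldots,j\}$; thus $\pi$ has no cut point. This shows that $\pi$ is indecomposable precisely when $B$ is empty, i.e.\ when $1$ is the final letter.

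Finally I would set up the bijection $\pi = A\,1 \longmapsto \Pi(A)$ sending an indecomposable $\pi \in \SSSS_{n}(312)$ to the standardization of its length-$(n-1)$ prefix. This is plainly a bijection onto $\SSSS_{n-1}$: the inverse takes a word of length $n-1$, raises each letter by $1$, and appends the letter $1$. It restricts to a bijection between the indecomposables in $\SSSS_{n}(312)$ and all of $\SSSS_{n-1}(312)$, because appending the minimum letter $1$ at the very end can never complete a $312$ pattern, since such a pattern requires its smallest letter in the \emph{middle} position, never the last. As the number of $312$-avoiding permutations in $\SSSS_{n-1}$ is $C_{n-1}$, the result follows.

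The computations here are all routine; the only point requiring genuine care is the cut-point characterization in the second step, namely verifying both that the position of $1$ really does yield a cut point when $B \neq \emptyset$ and that no cut point whatsoever survives when $1$ is last. Everything else is bookkeeping.
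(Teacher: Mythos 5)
Your proof is correct and follows essentially the same route as the paper: the decomposition $\pi = A\,1\,B$ with every letter of $B$ exceeding every letter of $A$, the observation that indecomposability is equivalent to $B$ being empty, and the resulting bijection with $312$-avoiding permutations of length $n-1$. You simply spell out the cut-point verification and the bijection's well-definedness in more detail than the paper does.
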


One extension of the notion of a permutation
is the notion of an affine permutations.
While the symmetric group~$\SSSS_{d}$ is the Weyl group~$A_{d-1}$, the group of affine permutations~$\AS_{d}$ is the affine Weyl group~$\widetilde{A}_{d-1}$.  However, the combinatorial description of affine permutations is due to Lusztig (unpublished) and the first combinatorial study of them was conducted in~\cite{Bjorner_Brenti,Ehrenborg_Readdy}.  An {\em affine permutation}
$\pi \in \AS_{d}$
is a bijection $\pi : \Zzz \longrightarrow \Zzz$ such that
\begin{align}
& \pi(i+d) = \pi(i) + d, \label{equation_affine_permutation_i} \\
& \sum_{i=0}^{d-1} (\pi(i) - i) = 0.
\end{align}
Note that the first condition implies that the values $\pi(0)$ through $\pi(d-1)$ determine the whole affine permutation.

We now extend the notion of an affine permutation to bi-infinite sequences.  A {\em bi-infinite sequence} is defined to be an injective function $f : \Zzz \longrightarrow \Rrr$.  Alternatively, one can think of a bi-infinite sequence as a bi-infinite list $(\ldots, f(-1), f(0), f(1), f(2), \ldots)$ of distinct real numbers.  We say that two bi-infinite sequences $f$ and $g$ are equivalent if there is a strictly increasing continuous function $T : \Rrr \longrightarrow \Rrr$ such that $g(i) = T(f(i))$.
Equivalently, it is enough to
assume that $T$
is strictly increasing and bijective.
It is straightforward that this relation
is an equivalence relation.  We think about the equivalence classes as bi-infinite permutations.  Hence, it is natural to extend notions from permutation patterns theory to bi-infinite sequences.

A {\em cut point} for a bi-infinite sequence $f$ is an
index $j$ such that for all integers $i \leq j < k$ we have that
$f(i) < f(k)$.
The {\em inversion set} for a bi-infinite sequence $f$ is the set
$$  \Inv(f) = \{(i,j) \in \Zzz^{2} \:\: : \:\: i < j, f(i) > f(j) \} . $$
A bi-infinite sequence is {\em periodic} with period $d$, if for all integers $i$ and $j$, the inequality $f(i) < f(j)$ is equivalent to the inequality $f(i+d) < f(j+d)$.  Equivalently, a bi-infinite sequence is periodic with period $d$ if the inversion set satisfies the condition $(i,j) \in \Inv(f)$ is equivalent to $(i+d, j+d) \in \Inv(f)$.  Extending the notion of pattern avoidance, we say that a bi-infinite sequence $f$ avoids the pattern $\sigma \in \SSSS_{n}$ if there are no integers $i_{1} < i_{2} < \cdots < i_{n}$ such that $\Pi(f(i_{1})f(i_{2}) \cdots f(i_{n})) = \sigma$.

\section{Compositions and cyclic compositions}
\label{section_compositions}

A composition of a non-negative integer $d$ into $k$ parts is a list of $k$ positive integers $(a_{1}, a_{2}, \ldots, a_{k})$ such that their sum is $d$.  Let $\alpha_{1}, \alpha_{2}, \ldots$ be a sequence of numbers and $f(t) = \sum_{i \geq 1} \alpha_{i} t^{i}$ be the associated generating function.  Form a new sequence $(\beta_{d,k})_{d \geq 1}$ by the relation
$$  \beta_{d,k}
= \sum_{(a_{1}, a_{2}, \ldots, a_{k})} \alpha_{a_{1}} \alpha_{a_{2}} \cdots \alpha_{a_{k}}
$$
where the sum is over all compositions of $d$ into $k$ parts.  Additionally, we set $\beta_{0,k}$ to be the {\em Kronecker delta} $\delta_{0,k}$, which is equal to 1 if $k=0$ and $0$ otherwise.  Also, let the sequence $(\beta_{d})_{d\geq 0}$ be defined by the sum $\beta_{d} = \sum_{k \geq 0} \beta_{d,k}$.  The following relations are classical generatingfunctionology:

\begin{equation}
       \sum_{d \geq 0} \beta_{d,k} t^{d}
   =
       (f(t))^{k}   
\:\:\:\: \text{ and } \:\:\:\:       
       \sum_{d \geq 0} \beta_{d}  t^{d}
   =
       \frac{1}{1 - f(t)}.
       \label{equation_beta_d_k}
     \end{equation}

For a family of sets $S_{i}$,
where $\alpha_{i}$ is the cardinality of a set $S_{i}$, we can give a combinatorial interpretation to the number~$\beta_{d,k}$, hence also $\beta_{d}$.  An {\em enriched composition} is a pair $({\bf a}, {\bf s})$ where ${\bf a}$ is a composition 
$(a_{1}, a_{2}, \ldots, a_{k})$ of~$d$ into $k$ parts and ${\bf s} = (s_{1}, s_{2}, \ldots, s_{k})$ is a list of the same length such that the element $s_{i}$ belongs to the set~$S_{a_{i}}$.  Now $\beta_{d,k}$ is the number of enriched compositions of $d$ into $k$ parts, and $\beta_{d}$ is the number of enriched compositions of $d$.

Note that each connected subgraph of a path is also a path.
Hence a composition 
$(a_{1}, a_{2}, \ldots, a_{k})$
of $d$ can be thought of as a subgraph of the path on $d$ vertices,
where $a_{i}$ is the size of the $i$th connected component.
The number of connected components of the subgraph is the number of parts of the composition.  With this analogue in mind we define a {\em cyclic composition} to be a subgraph of the labeled
cycle on $d$ vertices
where each component is a path.  Note that we rule out the case of the cycle being a subgraph of itself.  Yet again, the number of paths is the number of components $k$.  Observe that $k$ is also the number of edges removed to obtain the subgraph.  Since there are $d$ edges in a cycle, we have $\binom{d}{k}$ cyclic compositions of $d$ into $k$ parts for $k \geq 1$.  For instance, there are $\binom{4}{2} = 6$ cyclic compositions of $4$ into two parts, namely, two consisting of two $2$s and four consisting of $1$ and $3$ (see Figure~\ref{figure_two}).

\begin{figure}[ht]
\setlength{\unitlength}{0.7 mm}
\begin{center}
  \begin{picture}(145,15)(0,0)

\multiput(0,0)(27,0){6}
{
\put(0,0){\circle*{2}}
\put(10,0){\circle*{2}}
\put(0,10){\circle*{2}}
\put(10,10){\circle*{2}}
\put(-3,11){\small 1}
\put(11,11){\small 2}
\put(11,-4){\small 3}
\put(-3,-4){\small 4}
}

\newcommand{\north}{\put(0,10){\line(1,0){10}}}
\newcommand{\east}{\put(10,0){\line(0,1){10}}}
\newcommand{\south}{\put(0,0){\line(1,0){10}}}
\newcommand{\west}{\put(0,0){\line(0,1){10}}}

\thicklines

%%%% Cyclic composition 1
\put(0,0){\north \east}

%%%% Cyclic composition 2
\put(27,0){\north \west}

%%%% Cyclic composition 3
\put(54,0){\south \west}

%%%% Cyclic composition 4
\put(81,0){\south \east}

%%%% Cyclic composition 5
\put(108,0){\north \south}

%%%% Cyclic composition 6
\put(135,0){\east \west}

\end{picture}
\end{center}
\caption{The $6$ cyclic compositions of $4$ into two parts.}
\label{figure_two}
\end{figure}
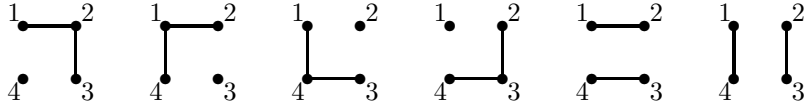

To be more formal, let $\Zzz_{d}$ denote both the integers modulo $d$ and the cycle of length $d$, where we connect $i$ and $i+1$ modulo $d$.  A cyclic composition is a set partition
$\mP = \{\mB_{1}, \mB_{2}, \ldots, \mB_{k}\}$
where each block $\mB_{i}$ is a path in the cycle $\Zzz_{d}$.  Equivalently, each block $\mB_{i}$ is the image of an interval~$[p_{i},q_{i}]$ of integers under the quotient map $\Zzz \longrightarrow \Zzz_{d}$ with the restriction $0 \leq q_{i} - p_{i} \leq d-1$.  Also, let $a_{i}$ be $q_{i} - p_{i} + 1$, that is, the cardinality of the interval $[p_{i},q_{i}]$ and its
associated path.

Similar
to compositions,
we construct new sequences $(\gamma_{d,k})_{d \geq 1}$
and $(\gamma_{d})_{d \geq 1}$ as follows:
$$  \gamma_{d,k}
= \sum_{\mP} \alpha_{a_{1}} \alpha_{a_{2}} \cdots \alpha_{a_{k}}
$$
where the sum is over all cyclic compositions $\mP$ of $d$ into $k$ parts and $a_{i}$ is the size of the $i$th part.  Also, let $\gamma_{d}$ denote the sum $\gamma_{d} = \sum_{k \geq 1} \gamma_{d,k}$.

\begin{proposition}
The generating functions for 
$\gamma_{d,k}$ and $\gamma_{d}$ are given by
\begin{align}
       \sum_{d \geq k} \gamma_{d,k} t^{d}
   & =
       t  f^{\prime}(t) (f(t))^{k-1}
   =
       \frac{t}{k}  D\left( (f(t))^{k} \right)  \mbox{ and }
       \label{equation_gamma_d_k} \\
       \sum_{d \geq 1} \gamma_{d} t^{d} & = \frac{t f^{\prime}(t)}{1 - f(t)},
       \label{equation_gamma_d}
     \end{align}
where $D$ is the differential operator with respect to $t$.
\end{proposition}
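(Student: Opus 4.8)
The plan is to reduce both identities to the already-established generating functions for ordinary compositions in Equation~\eqref{equation_beta_d_k}, by way of a single combinatorial identity relating $\gamma_{d,k}$ to $\beta_{d,k}$. First note that the second equality in~\eqref{equation_gamma_d_k} is immediate: since $D\bigl((f(t))^{k}\bigr) = k (f(t))^{k-1} f^{\prime}(t)$ by the product rule, we have $\frac{t}{k} D\bigl((f(t))^{k}\bigr) = t f^{\prime}(t) (f(t))^{k-1}$, so it suffices to establish the first equality.

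The heart of the argument is the claim that $k \, \gamma_{d,k} = d \, \beta_{d,k}$ for all $d \geq k \geq 1$. I would prove this by a weighted double count of the set of pairs $(\mP, e)$, where $\mP$ is a cyclic composition of $\Zzz_{d}$ into $k$ parts and $e$ is one of its $k$ removed edges, each such pair carrying the weight $\alpha_{a_{1}} \cdots \alpha_{a_{k}}$ of $\mP$. Counting by first choosing $\mP$ and then one of its $k$ removed edges gives weighted total $k \, \gamma_{d,k}$. On the other hand, cutting the labeled cycle at the distinguished removed edge $e = e_{j}$ opens it into a path on the $d$ vertices, read in cyclic order starting just after $e$, and the remaining $k-1$ removed edges split this path into an ordinary composition $(a_{1}, \ldots, a_{k})$ of $d$ into $k$ parts of the same total weight. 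Reconstructing $(\mP, e)$ requires exactly that ordered composition together with the cut position $j \in \{0, \ldots, d-1\}$, so the pairs are in weight-preserving bijection with the set of ordinary compositions of $d$ into $k$ parts times $\Zzz_{d}$, whose weighted cardinality is $d \, \beta_{d,k}$. This yields the identity.

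Given the identity, the first formula follows by applying the operator $t D$ to the first relation in~\eqref{equation_beta_d_k}: summing over $d$ and using that $\beta_{d,k} = 0$ for $0 \leq d < k$ when $k \geq 1$,
\[
  \sum_{d \geq k} \gamma_{d,k} \, t^{d}
  = \frac{1}{k} \sum_{d \geq k} d \, \beta_{d,k} \, t^{d}
  = \frac{t}{k} D\left( \sum_{d \geq 0} \beta_{d,k} \, t^{d} \right)
  = \frac{t}{k} D\bigl( (f(t))^{k} \bigr).
\]
Finally, for~\eqref{equation_gamma_d} I would sum the first formula over all $k \geq 1$ and recognize a geometric series,
\[
  \sum_{d \geq 1} \gamma_{d} \, t^{d}
  = \sum_{k \geq 1} \sum_{d \geq k} \gamma_{d,k} \, t^{d}
  = t f^{\prime}(t) \sum_{k \geq 1} (f(t))^{k-1}
  = \frac{t f^{\prime}(t)}{1 - f(t)},
\]
which is legitimate as a formal power series since $f$ has zero constant term.

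I expect the main obstacle to be the bookkeeping in the double count: one must verify that cutting at the distinguished edge produces each ordered composition exactly $d$ times, once for each cut position on the \emph{labeled} cycle, and that it is precisely this labeled (rather than rotation-equivalence) structure of cyclic compositions that makes the factor $d$ correct, with no cyclic-symmetry correction needed. Everything else is routine generatingfunctionology.
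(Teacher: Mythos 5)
Your proof is correct, but it takes a genuinely different route from the paper's, essentially running the logic in the opposite direction. The paper proves Equation~\eqref{equation_gamma_d_k} directly by a generating-function decomposition anchored at a \emph{vertex}: the component containing the vertex $1$ has size $i$ and can be chosen in $i$ ways (according to which of its $i$ positions the vertex $1$ occupies), contributing $\sum_{i \geq 1} i \alpha_{i} t^{i} = t f^{\prime}(t)$, while the rest of the cycle is a path carrying an ordinary composition into $k-1$ parts, contributing $(f(t))^{k-1}$; the relation $\gamma_{d,k} = \frac{d}{k} \beta_{d,k}$ is then deduced \emph{afterwards} as Corollary~\ref{corollary_beta_gamma} by comparing the two generating functions. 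You instead establish $k \gamma_{d,k} = d \beta_{d,k}$ \emph{first}, by double counting pairs consisting of a cyclic composition and a distinguished removed \emph{edge} (cutting the cycle at that edge), and then recover~\eqref{equation_gamma_d_k} from the $\beta$ generating function in~\eqref{equation_beta_d_k} by applying $\frac{t}{k} D$. Both arguments cut the cycle open; the difference is edge-marking versus vertex-marking, and identity-then-algebra versus direct construction. Your route has the added value of giving a weight-preserving bijective proof of Corollary~\ref{corollary_beta_gamma}, which the paper obtains only through generating functions (and which the paper's final open question, in a related setting, suggests is worth having), at the cost of being somewhat longer. Your handling of the details is sound: the vanishing of $\beta_{d,k}$ for $d < k$, the zero constant term of $f$ justifying the geometric series, and the exchange of summation order (legitimate since each coefficient involves only finitely many nonzero terms) are all correctly noted, and the treatment of~\eqref{equation_gamma_d} by summing over $k$ coincides with the paper's.
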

\begin{proof}
To observe the first relation, consider the component containing the vertex $1$ of the cycle.  Also, assume that this component has size $i$.  Then there are $i$ possibilities how to choose this component.  This is encoded by the generating function $\sum_{i \geq 1} i \alpha_{i} t^{i} = t f^{\prime}(t)$.  Next we have to choose a composition of $d-i$ into $k-1$ parts, which is given by $(f(t))^{k-1}$. The first result follows from multiplication of generating functions.  The second result follows from summing
Equation~\eqref{equation_gamma_d_k} over all~$k$.
\end{proof}

As a brief example of Equation~\eqref{equation_gamma_d}, note that setting $\alpha_{i} = 1$ enumerates the number of cyclic compositions. We have $f(t) = 1/(1-t) - 1$ and obtain $\sum_{d \geq 1} \gamma_{d} t^{d} = 1/(1-2t) - 1/(1-t)$, yielding the answer of $2^{d} - 1$ for the number of cyclic compositions of $d$.

Combining generating functions in Equations~\eqref{equation_beta_d_k}
and~\eqref{equation_gamma_d_k} we have the following result.
\begin{corollary}
The two quantities 
$\beta_{d,k}$ and $\gamma_{d,k}$
are related by
\begin{equation}
\gamma_{d,k} = \frac{d}{k}  \beta_{d,k}.
\label{equation_beta_gamma}
\end{equation}
\label{corollary_beta_gamma}
\end{corollary}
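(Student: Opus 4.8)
The plan is to obtain the identity by comparing coefficients of $t^{d}$ in the two generating functions already established in \eqref{equation_beta_d_k} and \eqref{equation_gamma_d_k}, so the whole argument is essentially one line of generating-function manipulation. Writing $B_{k}(t) = (f(t))^{k} = \sum_{d \geq 0} \beta_{d,k} t^{d}$, I would apply the differential operator $D$ termwise, obtaining $D(B_{k}(t)) = \sum_{d \geq 0} d\, \beta_{d,k}\, t^{d-1}$, and then multiply by $t/k$ to get
$$ \frac{t}{k} D\left( (f(t))^{k} \right) = \sum_{d \geq 0} \frac{d}{k}\, \beta_{d,k}\, t^{d} . $$
On the other hand, the right-hand side of \eqref{equation_gamma_d_k} is exactly $\frac{t}{k} D\left( (f(t))^{k} \right)$, and by definition its coefficient of $t^{d}$ is $\gamma_{d,k}$. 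Equating the coefficients of $t^{d}$ on the two sides then yields $\gamma_{d,k} = \frac{d}{k}\, \beta_{d,k}$, which is \eqref{equation_beta_gamma}.

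A more conceptual route, which I would mention as an alternative, is to prove the equivalent integer identity $k\, \gamma_{d,k} = d\, \beta_{d,k}$ bijectively. The right-hand side counts pairs consisting of an enriched (linear) composition of $d$ into $k$ parts together with a marked element $r \in \Zzz_{d}$; laying the blocks of the composition out consecutively around the cycle $\Zzz_{d}$ starting at position $r$ wraps around exactly once and produces an enriched cyclic composition in which one gap between consecutive blocks---the \emph{seam} at $r$---is distinguished. Distinguishing a seam is the same as distinguishing one of the $k$ blocks, so this matches the left-hand side $k\, \gamma_{d,k}$. In the unenriched case $\alpha_{i} = 1$ this specializes to the familiar identity $d\binom{d-1}{k-1} = k\binom{d}{k}$.

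There is really no serious obstacle here: the content of the corollary has already been front-loaded into the two generating-function identities \eqref{equation_beta_d_k} and \eqref{equation_gamma_d_k}, and once those are in hand the comparison of coefficients is immediate. The only thing to watch is the bookkeeping at the boundary: for $k \geq 1$ both $\beta_{d,k}$ and $\gamma_{d,k}$ vanish for $d < k$, since $(f(t))^{k}$ is divisible by $t^{k}$, so the claimed equality holds trivially there, while at $d = k$ both sides equal $\alpha_{1}^{k}$; thus nothing is lost by writing the $\gamma$ sum from $d \geq k$ and the $\beta$ sum from $d \geq 0$. If a self-contained bijective proof were wanted instead, the main point requiring care would be checking that the ``lay out and mark the seam'' map is genuinely a bijection, that is, that the marked element $r$ can be recovered from the distinguished block in the cyclic picture.
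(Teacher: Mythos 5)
Your proposal is correct and is essentially the paper's own argument: the paper derives Corollary~\ref{corollary_beta_gamma} precisely by combining Equations~\eqref{equation_beta_d_k} and~\eqref{equation_gamma_d_k}, i.e.\ by noting that $\frac{t}{k}D\left((f(t))^{k}\right)$ has coefficient $\frac{d}{k}\beta_{d,k}$ at $t^{d}$ and equating this with $\gamma_{d,k}$. Your supplementary bijective sketch and boundary bookkeeping go beyond what the paper records, but the core route is the same.
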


An {\em enriched cyclic composition} is a pair $(\mP, {\bf s})$ where $\mP$ is a cyclic composition $\{\mB_{1}, \mB_{2}, \ldots, \mB_{k}\}$ of~$d$ into $k$ parts and ${\bf s} = (s_{1}, s_{2}, \ldots, s_{k})$ is a list of length $k$ such that the element $s_{i}$ belongs to the set~$S_{|\mB_{i}|}$.  Now $\gamma_{d,k}$ has the combinatorial interpretation as the number of enriched cyclic compositions of $d$ into $k$ parts, and $\beta_{d}$ is the number of enriched cyclic compositions of $d$.

Let $C(t)$ and $\CB(t)$ denote the generating functions for the Catalan numbers and the central binomial coefficients, that is,
\begin{align*}
  C(t) & = \sum_{d \geq 0} C_{d}  t^{d} = \frac{1 - \sqrt{1 - 4t}}{2t} , \\
  \CB(t) & = \sum_{d \geq 0} \binom{2d}{d} t^{d} = \frac{1}{\sqrt{1 - 4t}} .
\end{align*}

\begin{lemma}
  Let $\alpha_{i} = C_{i-1} + \delta_{i,1}$ where $\delta_{i,1}$ denotes the Kronecker delta.  Then the central binomial coefficient is given by the sum
$$  
\binom{2d}{d} = \sum_{P} \alpha_{a_{1}} \alpha_{a_{2}} \cdots \alpha_{a_{k}},
$$ 
where the sum is over all cyclic compositions $P$ of $d$ and $d \geq 1$.
\label{lemma_central_binomial}
\end{lemma}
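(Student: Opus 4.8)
The key observation is that the right-hand side is precisely the quantity $\gamma_d = \sum_{k \geq 1} \gamma_{d,k}$ introduced in Section~\ref{section_compositions}, specialized to the weights $\alpha_i = C_{i-1} + \delta_{i,1}$. The plan is therefore to invoke the generating-function formula in Equation~\eqref{equation_gamma_d} and simplify the resulting closed form, rather than to argue combinatorially.

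First I would assemble the generating function $f(t) = \sum_{i \geq 1} \alpha_i t^i$. Splitting the two pieces of $\alpha_i$, the Catalan part contributes $\sum_{i \geq 1} C_{i-1} t^i = t C(t)$ (reindexing by $j = i-1$), while the Kronecker delta contributes a single $t$. Hence $f(t) = t C(t) + t$, and using $C(t) = (1 - \sqrt{1-4t})/(2t)$ this becomes $f(t) = \tfrac12\bigl(1 - \sqrt{1-4t}\bigr) + t$.

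Next I would substitute into Equation~\eqref{equation_gamma_d}. Differentiating gives $f^{\prime}(t) = (1-4t)^{-1/2} + 1 = \CB(t) + 1$, so $t f^{\prime}(t) = t\,\CB(t) + t$. For the denominator I would set $s = \sqrt{1-4t}$ and record the identities $t = (1-s^2)/4$ and, after a short computation, $1 - f(t) = (1+s)^2/4$. Then
\[
\frac{t f^{\prime}(t)}{1 - f(t)} = \frac{t(1+s)/s}{(1+s)^2/4} = \frac{4t}{s(1+s)} = \frac{(1-s)(1+s)}{s(1+s)} = \frac{1-s}{s} = \frac{1}{s} - 1,
\]
since $4t = 1 - s^2 = (1-s)(1+s)$. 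Recognizing $1/s = \CB(t)$, this equals $\CB(t) - 1 = \sum_{d \geq 1} \binom{2d}{d} t^d$. Comparing coefficients with $\sum_{d \geq 1}\gamma_d t^d$ yields $\gamma_d = \binom{2d}{d}$ for all $d \geq 1$, which is the asserted identity.

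The computation is entirely routine once the weights are recognized as defining $\gamma_d$; the only point demanding care is the bookkeeping of constant terms, since $\CB(t)$ carries the term $\binom{0}{0} = 1$ whereas $\gamma_d$ is indexed from $d = 1$. That discrepancy is exactly what produces the $-1$ in $\CB(t) - 1$, so no separate check for small $d$ is needed. If one preferred a bijective argument, the harder part would be to match enriched cyclic compositions directly with the $\binom{2d}{d}$ objects counted by the central binomial coefficient; I expect that to be considerably more delicate than the generating-function route, and I would not pursue it here.
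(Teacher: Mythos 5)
Your proposal is correct and follows exactly the paper's own route: the paper also sets $f(t) = t + tC(t)$ and deduces the result from Equation~\eqref{equation_gamma_d} together with the identity $\CB(t) - 1 = \frac{t\,(t + tC(t))'}{1 - t - tC(t)}$, which the paper states without proof and you verify explicitly. Your algebraic verification (via $s = \sqrt{1-4t}$, $1 - f(t) = (1+s)^2/4$) is accurate, so your write-up simply supplies the details the paper leaves to the reader.
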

\begin{proof}
  First, observe that $\sum_{i \geq 1} \alpha_{i} t^{i} = t + t C(t)$.  The result now follows from Equation~\eqref{equation_gamma_d} and the identity
\begin{align*}
\CB(t) - 1
& = \frac{t (t + t C(t))^{\prime}}{1 - t - t C(t)} .  
\qedhere
\end{align*}
\end{proof}

The following two well-known identities involving the Catalan numbers are worth keeping in mind in the next section, where we prove similar results regarding affine $312$-avoiding permutations.  We have
\begin{equation}
      \sum_{(a_{1}, a_{2}, \ldots, a_{k})}
                C_{a_{1}-1} C_{a_{2}-1} \cdots C_{a_{k}-1}
    =
        \frac{k}{d} \binom{2d-k-1}{d-1}  ,
        \label{equation_A009766} 
      \end{equation}   
where the sum is over all compositions of $d$ into $k$ parts.  The numbers in the right hand side give Catalan's triangle, sequence A009766 in \cite{oeis}.  One of many things they enumerate is the set of $312$-avoiding permutations of length $d$ that split into (at most) $k$ components.  Namely, such a permutation can be decomposed as $A_{1} A_{2} \cdots A_{k}$ where every letter of $A_{i}$ is smaller than each letter of $A_{j}$ for $i<j$.  Since each component is an indecomposable $312$-avoiding permutation, these permutations with $k$ components are counted by the left hand side.  By a similar argument we have
\begin{equation*}
  \sum_{(a_{1}, a_{2}, \ldots, a_{k})}
  C_{a_{1}-1} C_{a_{2}-1} \cdots C_{a_{k}-1}
  =
  C_{d} , 
\end{equation*}   
where the sum is over all compositions of $d$.

By combining Corollary~\ref{corollary_beta_gamma} and
Equation~\eqref{equation_A009766} we have:
\begin{lemma}
The following identity holds
\begin{equation}
      \sum_{\mP}
       C_{a_{1}-1} C_{a_{2}-1} \cdots C_{a_{k}-1}
    =
        \binom{2d-k-1}{d-1} ,
        \label{equation_cyclic_composition_Catalan}
      \end{equation}
where the sum is over all cyclic compositions $\mP$
of $d$ into $k$ parts.
\label{lemma_cyclic_composition_Catalan}
\end{lemma}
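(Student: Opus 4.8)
The plan is to read the identity directly off the general machinery of Section~\ref{section_compositions}, specialized to the sequence $\alpha_{i} = C_{i-1}$, where $C_{i-1}$ denotes the $(i-1)$st Catalan number. With this choice, the defining sum $\gamma_{d,k} = \sum_{\mP} \alpha_{a_{1}} \alpha_{a_{2}} \cdots \alpha_{a_{k}}$ over all cyclic compositions $\mP$ of $d$ into $k$ parts is, by construction, exactly the left-hand side of Equation~\eqref{equation_cyclic_composition_Catalan}. Hence it suffices to evaluate $\gamma_{d,k}$ for this $\alpha$.

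First I would compute the ordinary-composition analogue $\beta_{d,k}$ for the same sequence. By definition $\beta_{d,k} = \sum_{(a_{1}, \ldots, a_{k})} C_{a_{1}-1} \cdots C_{a_{k}-1}$, where the sum runs over all compositions of $d$ into exactly $k$ parts, which is precisely the left-hand side of the classical Catalan-triangle identity~\eqref{equation_A009766}. Thus $\beta_{d,k} = \frac{k}{d} \binom{2d-k-1}{d-1}$. Now Corollary~\ref{corollary_beta_gamma} supplies the relation $\gamma_{d,k} = \frac{d}{k} \beta_{d,k}$, and substituting gives
$$
\gamma_{d,k} = \frac{d}{k} \cdot \frac{k}{d} \binom{2d-k-1}{d-1} = \binom{2d-k-1}{d-1},
$$
which is the asserted identity.

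There is essentially no hard step here: all the substance has already been isolated, on one hand into Corollary~\ref{corollary_beta_gamma}, whose factor $d/k$ converts the linear count to the cyclic one, and on the other hand into the known identity~\eqref{equation_A009766}. The only thing needing genuine care is bookkeeping --- matching the convention that both sums range over compositions \emph{into exactly $k$ parts} and that a part of size $a_{i}$ contributes $\alpha_{a_{i}} = C_{a_{i}-1}$. As a consistency check one can sum over $k$: by the hockey-stick identity $\sum_{k \geq 1} \binom{2d-k-1}{d-1} = \binom{2d-1}{d} = \frac{1}{2}\binom{2d}{d}$, which is the value of $\gamma_{d}$ for the sequence $\alpha_{i} = C_{i-1}$; this correctly differs from the count $\binom{2d}{d}$ in Lemma~\ref{lemma_central_binomial} precisely because the sequence there has $\alpha_{1}$ larger by one. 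If one preferred to bypass Corollary~\ref{corollary_beta_gamma} entirely, the same result also follows from the generating-function formula~\eqref{equation_gamma_d_k} with $f(t) = t\,C(t) = (1 - \sqrt{1-4t})/2$, but the route through the corollary is the cleanest.
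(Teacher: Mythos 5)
Your proposal is correct and is essentially identical to the paper's own argument: the paper states the lemma precisely as the combination of Corollary~\ref{corollary_beta_gamma} (the factor $d/k$ relating linear to cyclic compositions) with the Catalan-triangle identity~\eqref{equation_A009766}, which is exactly your route with $\alpha_{i} = C_{i-1}$. Your additional consistency check via the hockey-stick identity is a nice touch but not needed.
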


\section{$312$-avoiding affine permutations}\label{321-affine-perms}

Before we continue, we take a detour to study $312$-avoiding affine permutations.  Recall that an affine permutation $\pi \in \AS_{d}$ is $312$-avoiding if there are no integers $i < j < k$ such that $\pi(j) < \pi(k) < \pi(i)$.  Crites~\cite{Crites} proved the following result for affine permutations.
\begin{theorem}[Theorem~6 in~\cite{Crites}]\label{thm-crites}
  The number of $312$-avoiding affine permutations in $\AS_{d}$ is given by~$\binom{2d-1}{d}$.
\end{theorem}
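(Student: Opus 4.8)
The plan is to re-prove Crites' theorem by a bijection that in fact refines it, counting $312$-avoiding affine permutations according to their number of cut points and then summing, rather than invoking the result of~\cite{Crites} directly. Throughout I work with an affine permutation $\pi \in \AS_{d}$ as the bi-infinite periodic sequence $(\pi(i))_{i\in\Zzz}$, and I use that $\pi(i)-i$ is bounded, being $d$-periodic, so that $\pi(i)\to\pm\infty$ as $i\to\pm\infty$; in particular $\min_{k\ge j}\pi(k)$ is attained for every $j$.

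The conceptual heart of the argument is a clean description of the cut points. First I would record the local consequence of $312$-avoidance: if $(i,k)$ is an inversion of $\pi$, that is $i<k$ and $\pi(i)>\pi(k)$, then $\pi(m)>\pi(k)$ for every $m$ with $i<m<k$, since otherwise $\pi(i),\pi(m),\pi(k)$ would form a $312$ pattern. Using this I claim that an index $j$ is a cut point of $\pi$ if and only if $j$ is a \emph{right-to-left minimum}, meaning $\pi(j)<\pi(k)$ for all $k>j$. The forward direction is immediate from the definition of a cut point (take $i=j$). For the converse, an inversion $(i,k)$ with $i\le j<k$ forces $\pi(j)>\pi(k)$ — directly when $i=j$, and by the structural consequence above when $i<j$ — contradicting the right-to-left minimum condition. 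This characterization gives existence for free: letting $p$ be the least index attaining $\min_{k\ge 0}\pi(k)$, every $k>p$ satisfies $k\ge 0$ and hence $\pi(k)>\pi(p)$, so $p$ is a cut point. Thus every $312$-avoiding affine permutation is decomposable, which is the corollary promised in the introduction.

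Next I would set up the bijection. Since $\pi(i+d)=\pi(i)+d$ preserves order, the set of cut points is invariant under the shift by $d$; writing $k\ge 1$ for the number of cut points in one period, these $k$ indices cut the labeled cycle $\Zzz_{d}$ into $k$ arcs, that is, they determine a cyclic composition of $d$ into $k$ parts. Between consecutive cut points $p<p'$ the block of positions $\{p+1,\dots,p'\}$ is mapped bijectively onto the block of values $\{A_{p}+1,\dots,A_{p'}\}$, where $A_{p}=\max_{i\le p}\pi(i)$; standardizing this block yields an \emph{indecomposable} $312$-avoiding permutation of length $a=p'-p$, and by Proposition~\ref{proposition_indecomposable} there are $C_{a-1}$ choices for it. Conversely, a cyclic composition of $d$ together with a choice of indecomposable $312$-avoiding permutation on each arc reconstructs $\pi$: the arcs are stacked in value in their cyclic order, the affine relation $\pi(i+d)=\pi(i)+d$ propagates the pattern across periods, and the normalization $\sum_{i=0}^{d-1}(\pi(i)-i)=0$ fixes the overall shift. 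This identifies $312$-avoiding affine permutations in $\AS_{d}$ with $k$ cut points with enriched cyclic compositions of $d$ into $k$ parts for the sets of indecomposable $312$-avoiding permutations, for which $\alpha_{a}=C_{a-1}$.

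With the bijection in hand the count is immediate from the earlier machinery. By Lemma~\ref{lemma_cyclic_composition_Catalan}, the number of such permutations with exactly $k$ cut points is $\binom{2d-k-1}{d-1}$, and summing over $k$ via the hockey-stick identity gives $\sum_{k\ge 1}\binom{2d-k-1}{d-1}=\binom{2d-1}{d}$. Equivalently, one may feed $f(t)=\sum_{a\ge1}C_{a-1}t^{a}=tC(t)$ into Equation~\eqref{equation_gamma_d}: a short simplification of $t f'(t)/(1-f(t))$ collapses it to $\tfrac12(\CB(t)-1)$, whose coefficient of $t^{d}$ is $\tfrac12\binom{2d}{d}=\binom{2d-1}{d}$. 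I expect the main obstacle to be the reconstruction half of the bijection rather than the enumeration: one must check that stacking indecomposable blocks around the cycle, with the value ranges shifting by $d$ after each full turn, produces a genuine affine permutation and that distinct enriched cyclic compositions give distinct permutations — in other words, that the cyclic, as opposed to linear, gluing is consistent and that the normalization pins down a unique representative.
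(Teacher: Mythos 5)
Your proposal is correct, but it is not the paper's route: the paper never proves this statement at all --- it imports it from Crites~\cite{Crites} and then uses it, together with the refinement in Theorem~\ref{theorem_k_cuts_312}, to deduce that every $312$-avoiding affine permutation is decomposable (the first proof of Corollary~\ref{corollary_there_is_a_cut point}). You run the logic in the opposite direction, and this works. Your two key ingredients are sound: (a) the direct existence of a cut point, via the characterization of cut points as right-to-left minima (the local consequence of $312$-avoidance that an inversion $(i,k)$ forces $\pi(m)>\pi(k)$ for $i<m<k$ is exactly what makes the converse direction go through) together with the fact that $\pi(i)-i$ is $d$-periodic, hence bounded, so $\min_{k\geq 0}\pi(k)$ is attained; this is a simpler argument than the paper's own independent proof of that corollary, which builds a forest-shaped poset on $\{(i,\pi(i))\}$ and analyzes maximal elements above a minimal one. (b) The bijection between permutations with exactly $k$ cut point classes and cyclic compositions of $\Zzz_{d}$ enriched with indecomposable $312$-avoiding permutations is precisely the paper's Theorem~\ref{theorem_k_cuts_312}, and it rests only on Proposition~\ref{proposition_indecomposable} and Lemma~\ref{lemma_cyclic_composition_Catalan}, both finite statements independent of Crites, so there is no circularity. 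The final summation $\sum_{k=1}^{d}\binom{2d-k-1}{d-1}=\binom{2d-1}{d}$ (hockey stick), or equivalently your generating-function collapse of $tf'(t)/(1-f(t))$ to $\frac{1}{2}\left(\CB(t)-1\right)$ with $f(t)=tC(t)$, is the same identity the paper invokes in the first proof of Corollary~\ref{corollary_there_is_a_cut point}, just read in the other direction. What your route buys is self-containedness: decomposability and Crites' formula are both established from scratch, so the paper's dependence on \cite{Crites} could be removed entirely. What the paper's treatment buys is brevity, plus the fact that Crites' original proof remains an independent check on the count.
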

We give a refinement of this result.
Recall that a cut point for an affine permutation $\pi$
is an index $j$ such that for $i \leq j < k$ the inequality $\pi(i) < \pi(k)$ holds.
Note that, if $j$ is a cut point,
then so is any index congruent to $j$ modulo $d$.
Hence, we count the number of equivalence classes of cut points.

\begin{theorem}
  Let $k$ be a positive integer and $k\leq d$.  The number of $312$-avoiding affine permutations in $\AS_{d}$ that have $k$ cut points modulo $d$ is given by
$$ \binom{2d-k-1}{d-1} . $$
\label{theorem_k_cuts_312}
\end{theorem}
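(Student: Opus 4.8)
The plan is to establish a bijection between the $312$-avoiding affine permutations in $\AS_{d}$ with exactly $k$ cut points modulo $d$ and the enriched cyclic compositions of $d$ into $k$ parts in which the decorating set $S_{a}$ is taken to be the set of indecomposable $312$-avoiding permutations in $\SSSS_{a}$. Since $|S_{a}| = C_{a-1}$ by Proposition~\ref{proposition_indecomposable}, such a bijection identifies the count in question with $\sum_{\mP} C_{a_{1}-1} \cdots C_{a_{k}-1}$, summed over cyclic compositions $\mP$ of $d$ into $k$ parts, and Lemma~\ref{lemma_cyclic_composition_Catalan} then evaluates this sum as $\binom{2d-k-1}{d-1}$. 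So the entire content of the theorem is the construction of the bijection.

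First I would pin down what cut points look like concretely. Fix $\pi \in \AS_{d}$. I claim that $j$ is a cut point if and only if the $d$ values $\pi(j-d+1), \ldots, \pi(j)$ are $d$ consecutive integers, equivalently $\pi$ maps $(-\infty, j]$ bijectively onto $(-\infty, M]$ where $M = \max\{\pi(i) : i \leq j\}$. This follows because the defining inequality for a cut point is equivalent to $\max_{i \leq j} \pi(i) < \min_{k > j} \pi(k)$, and the periodicity $\pi(i+d) = \pi(i)+d$ forces both extrema to be attained inside a single window of $d$ consecutive arguments; the $d$ values in such a window form a complete residue system modulo $d$, so the strict inequality holds exactly when they are $d$ consecutive integers. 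In particular a cut point simultaneously cuts the domain and the range of $\pi$ into an initial and a final piece.

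Given a $312$-avoiding $\pi$ with cut points $c_{1} < \cdots < c_{k}$ in one fundamental domain (repeating modulo $d$), these residues correspond to removing $k$ of the $d$ edges of the cycle $\Zzz_{d}$, that is, to a cyclic composition $\mP$ of $d$ into $k$ parts with sizes $a_{i} = c_{i+1} - c_{i}$ (indices cyclic, $c_{k+1} = c_{1}+d$). The cut-point characterization shows that the block on positions $(c_{i}, c_{i+1}]$ is carried bijectively onto a set of $a_{i}$ consecutive values, and that the domain-order of the blocks agrees with their range-order. Standardizing each block yields a permutation in $\SSSS_{a_{i}}$; it is $312$-avoiding because it is a pattern of $\pi$, and it is indecomposable because an internal cut point of a block would itself be a genuine cut point of $\pi$, contradicting the maximality of the list $c_{1}, \ldots, c_{k}$. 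This defines the forward map to an enriched cyclic composition, and the $k\le d$ restriction is automatic since there are only $d$ edges to remove.

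For the inverse I would reconstruct $\pi$ from $\mP$ together with the chosen indecomposable pieces: the cyclic composition prescribes which positions occupy each block, the blocks are stacked so that value-order matches position-order, the standardized permutation on each block prescribes the relative values within it, and the affine normalization $\sum_{i=0}^{d-1}(\pi(i)-i)=0$ fixes the single remaining additive shift, giving a unique $\pi \in \AS_{d}$. The main obstacle is verifying that this reconstruction actually lands among $312$-avoiding permutations and produces exactly the prescribed cut points and no extra ones; the delicate point is that a hypothetical $312$ pattern could a priori span several blocks or several periods. I expect to exclude this using the monotonicity that the domain-block index equals the range-block index: if positions $i < j < l$ realized a $312$ pattern, then $\pi(i) > \pi(l)$ together with $i < l$ would force $i$ and $l$, hence also $j$, into a common block, so every $312$ pattern lives inside a single block and is therefore ruled out by that block being $312$-avoiding. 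Once the forward and reverse maps are checked to be mutually inverse, applying Lemma~\ref{lemma_cyclic_composition_Catalan} with $\alpha_{a} = C_{a-1}$ completes the count.
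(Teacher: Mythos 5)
Your proposal is correct and takes essentially the same route as the paper: both establish the bijection between $312$-avoiding affine permutations with $k$ cut points and cyclic compositions of $d$ into $k$ parts whose blocks are enriched with indecomposable $312$-avoiding permutations, and then invoke Proposition~\ref{proposition_indecomposable} and Lemma~\ref{lemma_cyclic_composition_Catalan} to get $\binom{2d-k-1}{d-1}$. The only difference is one of detail: the paper constructs the map from enriched compositions to affine permutations and declares bijectivity ``clear by construction,'' whereas you also spell out the verifications (the consecutive-values characterization of cut points, indecomposability of the standardized blocks, and the fact that any $312$ pattern in the reconstruction must lie in a single block).
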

\begin{proof}
Consider a cyclic composition $\{\mB_{1}, \mB_{2}, \ldots, \mB_{k}\}$ into $k$ parts of the cycle $\Zzz_{d}$ and enrich each block with an indecomposable $312$-avoiding permutation.
By Proposition~\ref{proposition_indecomposable}  
there are $C_{a-1}$ indecomposable $312$-avoiding permutations
in $\SSSS_{a}$,
hence the total number of enriched cyclic compositions is given by Lemma~\ref{lemma_cyclic_composition_Catalan}, that is, $\binom{2d-k-1}{d-1}$.  Let the $i$th block $\mB_{i}$ be the image of the interval~$[p_{i},q_{i}]$.  View the permutation $\pi_{i}$ enriching $\mB_{i}$ as a bijection on this interval. That is, we have the bijection $\pi_{i} : [p_{i},q_{i}] \longrightarrow [p_{i},q_{i}]$.  Now concatenate these $k$ bijections, that is, define
$$ 
\pi : \bigcup_{1 \leq i \leq k} [p_{i},q_{i}] = [p_{1},q_{k}] \longrightarrow [p_{1},q_{k}]
$$ 
by $\pi(j) = \pi_{i}(j)$ if $j \in [p_{i},q_{i}]$.  Finally, extend $\pi$ to all integers by Condition~\eqref{equation_affine_permutation_i}.  By construction, it is clear that $\pi$ is a $312$-avoiding affine permutation with $k$ cut points and that all $312$-avoiding affine permutations with $k$ cut points are constructed this way.
\end{proof}

\begin{corollary}
  Every $312$-avoiding affine permutation in $\AS_{d}$ has a cut point.
  \label{corollary_there_is_a_cut point}
\end{corollary}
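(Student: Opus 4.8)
The plan is to prove the corollary by a counting argument, comparing the refined enumeration of Theorem~\ref{theorem_k_cuts_312} against Crites' total count in Theorem~\ref{thm-crites}. The idea is that if the permutations possessing at least one cut point already exhaust all $312$-avoiding affine permutations, then none can be cut-point free.

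First, I would observe that the number of cut points modulo $d$ of a $312$-avoiding affine permutation in $\AS_{d}$, if positive, necessarily lies in the range $1 \leq k \leq d$, since a cyclic composition of the cycle $\Zzz_{d}$ has between $1$ and $d$ parts. By Theorem~\ref{theorem_k_cuts_312}, the number of such permutations with exactly $k$ cut points is $\binom{2d-k-1}{d-1}$. Summing over all positive values of $k$ therefore counts precisely those $312$-avoiding affine permutations that have at least one cut point.

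Second, I would evaluate this sum and show that it equals Crites' total. Making the substitution $j = 2d-k-1$ turns $\sum_{k=1}^{d} \binom{2d-k-1}{d-1}$ into $\sum_{j=d-1}^{2d-2} \binom{j}{d-1}$, which collapses by the hockey stick identity to $\binom{2d-1}{d}$. Since this agrees exactly with the total number of $312$-avoiding affine permutations given by Theorem~\ref{thm-crites}, the permutations with at least one cut point already account for every such permutation, leaving none with zero cut points. This yields the corollary.

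The argument has no serious obstacle; the only point that requires care is confirming that the summation range $1 \leq k \leq d$ in Theorem~\ref{theorem_k_cuts_312} genuinely captures every nonzero cut-point count, so that the sum is complete and the comparison with Crites' count is valid.
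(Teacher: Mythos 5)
Your proposal is correct and is essentially identical to the paper's own (first) proof: both sum the refined count $\binom{2d-k-1}{d-1}$ of Theorem~\ref{theorem_k_cuts_312} over $1 \leq k \leq d$, obtain $\binom{2d-1}{d}$ (you make the hockey-stick step explicit, which the paper leaves implicit), and conclude via Theorem~\ref{thm-crites} that permutations with a cut point exhaust all $312$-avoiding affine permutations. The paper additionally offers a second, independent poset-based proof, but your argument matches the first one.
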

\begin{proof}
  Since the sum of $\binom{2d-k-1}{d-1}$ for $k$ from $1$ to $d$ is $\binom{2d-1}{d}$, and by Theorem~\ref{thm-crites}, all the $312$-avoiding affine permutations have been accounted for.
\end{proof}

A more direct proof is as follows.
\begin{proof} [A second proof of Corollary~\ref{corollary_there_is_a_cut point}.]
Let $\pi$ be an affine 312-avoiding permutation in $\AS_{d}$.
Let $P$ be the set $\{(i,\pi(i)) \: : \: i \in \Zzz\}$.
Define a poset structure on the set $P$
by $(i,\pi(i)) <_{P} (j,\pi(j))$ if $i > j$ and $\pi(i) < \pi(j)$.
Note, for instance, that the set $\{(i,\pi(i)) \: : \: i \equiv j \bmod d\}$ forms an infinite antichain.

  Now if an element $(i,\pi(i))$ is greater than both of the elements $(j,\pi(j))$ and $(k,\pi(k))$, and these two elements are incomparable, then this triple forms a $312$-pattern.  Since we assumed $\pi$ is $312$-avoiding we have that the lower order ideal generated by a single element is a chain. In other words, the poset is a forest with the minimal elements as roots.

  Pick a minimal element $(i,\pi(i))$ in the poset.  We claim that $i$ is a cut point.  Among the maximal elements above $(i, \pi(i))$ in the poset pick an element $(j,\pi(j))$ with the largest second coordinate.  We claim that if $k \leq i$ then $\pi(k) \leq \pi(j)$.  There is nothing to prove if $\pi(k) \leq \pi(i)$.  If $\pi(k) \geq \pi(i)$ then we have $(k,\pi(k)) \geq_{P} (i,\pi(i))$.  But we picked $(j,\pi(j))$ to be the element with largest second coordinate, which proves the claim.  Next we claim that if $k > i$ then $\pi(k) > \pi(j)$.  Assume that $\pi(k) < \pi(j)$. Since $(i,\pi(i))$ is a minimal element we know that $\pi(i) < \pi(j)$.  However, this yields a contradiction since $\Pi(\pi(j), \pi(i), \pi(k)) = 312$, the prohibited pattern. Hence, $i$ is a cut point.
\end{proof}

\section{The graph of $312$-avoiding permutations}\label{sec-graph}

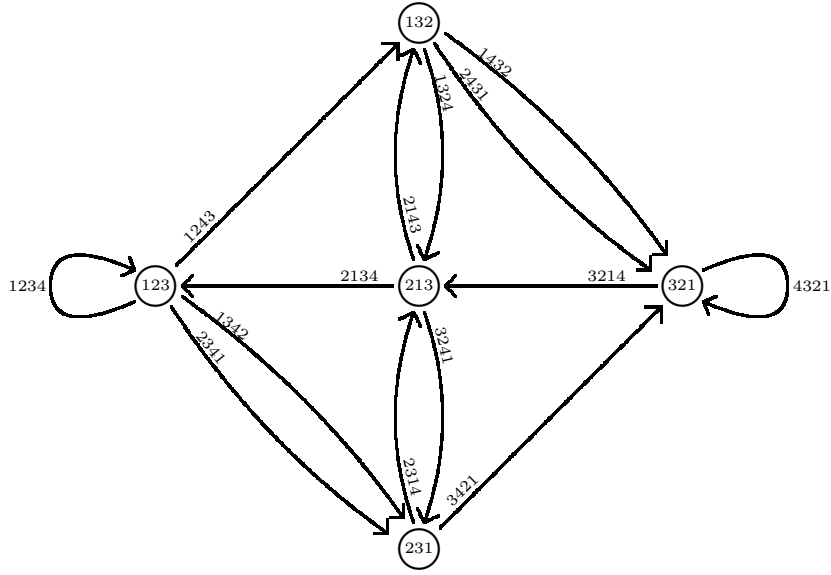
\begin{figure}
\setlength{\unitlength}{0.7 mm}
\begin{center}
  \begin{picture}(100,100)(-50,-50) \thicklines

\newcommand{\vertexcircle}{\circle{7.5}}
\put(0,0){\vertexcircle}
\put(50,0){\vertexcircle}
\put(0,50){\vertexcircle}
\put(-50,0){\vertexcircle}
\put(0,-50){\vertexcircle}
\put(-2.75,-0.8){
\put(0,0){\tiny $213$}
\put(50,0){\tiny $321$}
\put(0,50){\tiny $132$}
\put(-50,0){\tiny $123$}
\put(0,-50){\tiny $231$}
}

%%% Arrow 2134
\qbezier(-45,0)(-25,0)(-5,0)
\qbezier(-45,0)(-44,1)(-43,2)
\qbezier(-45,0)(-44,-1)(-43,-2)
\put(-15,1){\tiny $2134$}

%%% Arrow 3214
\qbezier(45,0)(25,0)(5,0)
\qbezier(5,0)(6,1)(7,2)
\qbezier(5,0)(6,-1)(7,-2)
\put(32,1){\tiny $3214$}

%%% Arrow 3241
\qbezier(1,-45)(8,-25)(1,-5)
\put(1,-45){\qbezier(0,0)(1.3,0.65)(2.7,1.3)
                  \qbezier(0,0)(-0.65,1.3)(-1.3,2.7)}
\put(3,-8){\rotatebox{-77}{\tiny $3241$}}

%%% Arrow 2314
\qbezier(-1,-45)(-8,-25)(-1,-5)
\put(-1,-5){\qbezier(0,0)(-1.3,-0.65)(-2.7,-1.3)
                 \qbezier(0,0)(0.65,-1.3)(1.3,-2.7)}
\put(-3,-33){\rotatebox{-77}{\tiny $2314$}}

%%% Arrow 1324
\qbezier(1,5)(8,25)(1,45)
\put(1,5){\qbezier(0,0)(1.3,0.65)(2.7,1.3)
               \qbezier(0,0)(-0.65,1.3)(-1.3,2.7)}
\put(3,40){\rotatebox{-77}{\tiny $1324$}}

%%% Arrow 2143
\qbezier(-1,5)(-8,25)(-1,45)
\put(-1,45){\qbezier(0,0)(-1.3,-0.65)(-2.7,-1.3)
                  \qbezier(0,0)(0.65,-1.3)(1.3,-2.7)}
\put(-3,17){\rotatebox{-77}{\tiny $2143$}}

%%% Arrow 1243
\qbezier(-46,4)(-25,25)(-4,46)
\qbezier(-4,46)(-4,46)(-7,46)
\qbezier(-4,46)(-4,46)(-4,43)
\put(-45,8){\rotatebox{45}{\tiny $1243$}}

%%% Arrow 3421
\qbezier(46,-4)(25,-25)(4,-46)
\qbezier(46,-4)(46,-4)(43,-4)
\qbezier(46,-4)(46,-4)(46,-7)
\put(5,-42){\rotatebox{45}{\tiny $3421$}}

%%% Arrow 1432
\qbezier(47,6)(30,30)(5,48)
\put(47,6){\qbezier(0,0)(-1.25,0)(-2.5,0)
                 \qbezier(0,0)(0,1.25)(0,2.5)}
\put(11,44){\rotatebox{-38}{\tiny $1432$}}

%%% Arrow 2431
\qbezier(44,3)(20,20)(3,46)
\put(44,3){\qbezier(0,0)(-1.25,0)(-2.5,0)
                 \qbezier(0,0)(0,1.25)(0,2.5)}
\put(7.5,40.5){\rotatebox{-51}{\tiny $2431$}}

%%% Arrow 1342
\qbezier(-3,-44)(-20,-20)(-45,-2)
\put(-3,-44){\qbezier(0,0)(-1.25,0)(-2.5,0)
                   \qbezier(0,0)(0,1.25)(0,2.5)}
\put(-39,-6){\rotatebox{-38}{\tiny $1342$}}

%%% Arrow 2341
\qbezier(-6,-47)(-30,-30)(-47,-4)
\put(-6,-47){\qbezier(0,0)(-1.25,0)(-2.5,0)
                 \qbezier(0,0)(0,1.25)(0,2.5)}
\put(-42.5,-9.5){\rotatebox{-51}{\tiny $2341$}}

%%% Arrow 1234
\qbezier(-54,-3)(-70,-10)(-70,0)
\qbezier(-54,3)(-70,10)(-70,0)
\put(-78,-1){\tiny $1234$}
\put(-54,3){\qbezier(0,0)(-0.5,1.2)(-1.0,2.4)
                  \qbezier(0,0)(-1.2,-0.5)(-2.4,-1.0)}

%%% Arrow 4321
\qbezier(54,3)(70,10)(70,0)
\qbezier(54,-3)(70,-10)(70,0)
\put(71,-1){\tiny $4321$}
\put(54,-3){\qbezier(0,0)(0.5,-1.2)(1.0,-2.4)
                  \qbezier(0,0)(1.2,0.5)(2.4,1.0)}

\end{picture}
\end{center}
\caption{The graph $G(3, 312)$, which has two $1$-cycles, two $2$-cycles and six $3$-cycles.}
\label{figure_three}
\end{figure}

Recall that $G(n)$ denotes the directed graph of overlapping permutations, that is, it has the vertex set $\SSSS_{n}$ and for every permutation $\sigma=\sigma_1\cdots\sigma_{n+1}$ in $\SSSS_{n+1}$ there is a directed edge from $\Pi(\sigma_{1}\cdots \sigma_{n})$ to $\Pi(\sigma_{2}\cdots \sigma_{n+1})$ labelled $\sigma$.  Furthermore, let $G(n,\tau)$ be the graph of overlapping $\tau$-avoiding permutations, that is, it is the subgraph of $G(n)$ having the vertex set $\SSSS_{n}(\tau)$ and the edge set~$\SSSS_{n+1}(\tau)$.  For an example, see~Figure~\ref{figure_three} where the graph $G(3,312)$ is presented.

A {\em closed walk} of length $d$ in a graph is a list of $d$ edges $(e_{1}, e_{2}, \ldots, e_{d})$ such that $\head(e_{i}) = \tail(e_{i+1})$ for $1 \leq i \leq d-1$ and $\head(e_{d}) = \tail(e_{1})$, where for a directed edge $e$, $\head(e)$ is the node the edge points to, while $\tail(e)$ is the other node incident to $e$.  Thus, $(1342, 2314, 2134)$ and its cyclic shift $(2134, 1342, 2314)$ are two different closed walks.

Define an equivalence on the set of closed walks by cyclic shifting, that is, two closed walks $(e_{1}, e_{2}, \ldots, e_{d})$ and $(e_{i}, e_{i+1}, \ldots, e_{d}, e_{1}, e_{2}, \ldots, e_{i-1})$ are equivalent.  Then a {\em $d$-cycle} is defined to be an equivalence class of size $d$.  For instance, the graph $G(3,312)$ in Figure~\ref{figure_three} has six closed walks of length $2$, namely,
$$   
(1234,1234), (4321,4321), (1324,2143), (2143,1324), (2314,3241) \mbox{ and } (3241,2314) .
$$
However, the graph $G(3,312)$ has only two $2$-cycles, since the first two closed walks yield $1$-cycles while the fourth (respectively, sixth) walk is equivalent to the third (respectively, fifth) walk.

The number of closed walks of a fixed length
is given by the following result.

\begin{theorem}
  The number of closed walks of length $d$ in $G(n,312)$, for $d \leq n$, is given by~$\binom{2d}{d}$.
  \label{theorem_closed_walks}
\end{theorem}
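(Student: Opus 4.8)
The plan is to translate closed walks into bi-infinite periodic $312$-avoiding sequences and then decompose those sequences with the cyclic-composition machinery of Section~\ref{section_compositions}, so that Lemma~\ref{lemma_central_binomial} produces the count $\binom{2d}{d}$ directly.

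First I would make precise the correspondence sketched in the introduction. A closed walk $(e_{0}, e_{1}, \ldots, e_{d-1})$ of length $d$ in $G(n,312)$ should be encoded by a bi-infinite $312$-avoiding sequence $f$ that is periodic with period dividing $d$, taken up to order-equivalence, via $e_{i} = \Pi(f(i), f(i+1), \ldots, f(i+n))$. The hypothesis $d \leq n$ is what makes this work: since each window $f(i), \ldots, f(i+n)$ has length $n+1 > d$ it contains a full period, so the head--tail matching of the walk together with its periodicity forces $f$ to be periodic, and, more delicately, the requirement that every edge lie in $\SSSS_{n+1}(312)$ becomes equivalent to global $312$-avoidance of $f$, because any $312$-pattern in $f$ can be slid by a multiple of the period into a single window of length $n+1$. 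I would record this as a lemma giving a bijection between closed walks of length $d$ and bi-infinite $312$-avoiding sequences of period dividing $d$, considered up to order-equivalence but with their $\Zzz$-indexing retained; keeping the indexing is exactly what separates a walk from its cyclic shifts and what makes the underlying cycle the labeled cycle $\Zzz_{d}$ of a cyclic composition.

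Next I would decompose such a sequence into an enriched cyclic composition of $\Zzz_{d}$, extending the affine-permutation construction from the proof of Theorem~\ref{theorem_k_cuts_312}. When $f$ has cut points, these are periodic modulo $d$ and cut $\Zzz_{d}$ into arcs; the segment of $f$ along each arc standardizes to an indecomposable $312$-avoiding permutation, so by Proposition~\ref{proposition_indecomposable} a block of size $a$ carries $C_{a-1}$ possibilities. These are precisely the sequences that come from $312$-avoiding affine permutations, and by Theorem~\ref{theorem_k_cuts_312} they account for $\binom{2d-1}{d}$ of the walks. The remaining sequences are those with no cut point --- the periodic analogue of a globally decreasing run, already visible at $d=1$, where the increasing sequence is the identity affine permutation while the decreasing sequence is the extra, cut-point-free object. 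These are exactly what the correction term $\delta_{i,1}$ in Lemma~\ref{lemma_central_binomial} is designed to record, by equipping each singleton block with a second flavor. Once the enrichment of a block of size $a$ is set up to have $\alpha_{a} = C_{a-1} + \delta_{a,1}$ choices, Lemma~\ref{lemma_central_binomial} evaluates the number of enriched cyclic compositions of $d$ as $\binom{2d}{d}$, which is the claim.

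The step I expect to be the main obstacle is making this decomposition uniform and invertible over all $312$-avoiding periodic sequences at once, rather than treating the cut-point and cut-point-free cases by hand. Concretely, I must give a single rule that reads off, from $f$, the cyclic composition of $\Zzz_{d}$ together with the indecomposable permutation or singleton flavor attached to each block; prove that the output is always a legitimate enriched cyclic composition, with the $312$-condition respected block by block; and then build the inverse that reconstructs the periodic sequence, and hence the closed walk, from this data. This well-definedness and inverse construction is the technical core, and is what the forthcoming Sections~\ref{sec-bijection} and~\ref{sec-inv-bijection} carry out; granting it, the theorem is immediate from Lemma~\ref{lemma_central_binomial}.
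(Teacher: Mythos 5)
Your outline reproduces the paper's strategy (lift closed walks to bi-infinite periodic sequences, decompose these into enriched cyclic compositions, and count via Lemma~\ref{lemma_central_binomial}), but the step you dispose of in one sentence is exactly the step that is false as you state it, and it is where essentially all of the work in the paper's proof lies. You claim that, for $d \leq n$, edge-wise $312$-avoidance of the walk is equivalent to global $312$-avoidance of a periodic lift $f$, ``because any $312$-pattern in $f$ can be slid by a multiple of the period into a single window of length $n+1$.'' Sliding does not work: periodicity only preserves order relations when \emph{all} indices of an occurrence are translated by the \emph{same} multiple of $d$, and such a common translation never shrinks the span of the occurrence; translating the indices by different multiples of $d$ is not controlled by periodicity at all. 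The paper's Remark~\ref{remark_be_careful} is precisely a counterexample to your claim: $h_{2}(m) = m + 2(-1)^{m}$ is order-periodic with period $2$, every window of length $3$ standardizes to $213$ or $132$ (so it encodes a legitimate closed walk in $G(2,312)$), and yet $h_{2}$ contains the occurrence $(h_{2}(0), h_{2}(1), h_{2}(3)) = (2,-1,1)$ of $312$, which spans four positions and cannot be moved into any window. Worse for your proposed lemma, $h_{1}(m) = m + (-1)^{m}$ and $h_{2}$ are two non-order-equivalent periodic lifts of the \emph{same} walk (one is $312$-avoiding and has cut points, the other is neither), so both the surjectivity onto walks of the map from $312$-avoiding periodic sequences and the injectivity up to equivalence that your bijection requires are genuinely unproved.

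Because of this, the decomposition step never gets off the ground from a raw closed walk: you must first \emph{construct} a lift with the needed avoidance properties, which is what the greedy rules $(+)$ and $(-)$ with the extremality Conditions~\eqref{equation_extra_condition} and~\eqref{equation_extra_condition_2} accomplish in Section~\ref{sec-inv-bijection}, and even then the lift is only \emph{locally} $312$-avoiding, so the decomposition has to be extracted from the analysis of the decreasing and increasing residue classes $D$ and $U$ (Claims~\ref{claim_three} and~\ref{claim_four}), with a separate cut-point argument via Lemma~\ref{lemma_chain} when $D$ is empty. A further inaccuracy: the cut-point-free sequences are not just ``the periodic analogue of a globally decreasing run''; they generically mix ${\bf D}$-positions with nontrivial indecomposable blocks (see Example~\ref{example_ex}, which has no cut points but contains the block $231$), and it is Claims~\ref{claim_three} and~\ref{claim_four} --- the ${\bf D}$-positions form a decreasing subsequence lying entirely below the $U$-positions --- that justify cutting at each ${\bf D}$-position. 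You correctly identified the reduction and the counting lemma, and you correctly flagged the well-definedness and invertibility of the decomposition as the main obstacle, but deferring that obstacle to ``the forthcoming sections'' leaves the actual content of the theorem unestablished in your argument.
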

A bijective proof of Theorem~\ref{theorem_closed_walks} will be given in the following two sections.

\begin{theorem}
  The number of $d$-cycles in $G(n,312)$, for $d \leq n$, is given by
$$   
\frac{1}{d} \sum_{e|d} \mu\left(\ffrac{d}{e}\right) \binom{2e}{e}.
$$
\label{theorem_number_of_cycles}
\end{theorem}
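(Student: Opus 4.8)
The plan is to deduce the cycle count from the closed-walk count in Theorem~\ref{theorem_closed_walks} by a standard necklace-style M\"obius inversion, exactly analogous to how Equation~\eqref{equation_de_Bruijn} is obtained for the De Bruijn graph. Write $W(d) = \binom{2d}{d}$ for the number of closed walks of length $d$, valid for $d \le n$. The cyclic group $\Zzz_{d}$ acts on the set of closed walks of length $d$ by cyclic shifting, and by definition a $d$-cycle is an orbit of size exactly $d$. My first task is to identify which walks lie in such orbits.

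First I would introduce the minimal period of a closed walk $w = (e_{1}, \ldots, e_{d})$: the smallest positive $e$ such that shifting by $e$ fixes $w$, that is, $e_{i} = e_{i+e}$ for all indices $i$ read modulo $d$. The set of shifts fixing $w$ is a subgroup of $\Zzz_{d}$, hence equals $\{0, e, 2e, \ldots\}$ for some $e \mid d$, so the stabilizer has order $d/e$ and the orbit of $w$ has size exactly $e$. Consequently the $d$-cycles are precisely the orbits of the \emph{aperiodic} closed walks, those of minimal period $d$, and each such orbit contains exactly $d$ walks. I then need the decomposition: a closed walk of length $d$ and minimal period $e$ is obtained by repeating $d/e$ times a single aperiodic closed walk of length $e$. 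The key check here is that the initial length-$e$ block $(e_{1}, \ldots, e_{e})$ is itself a closed walk, which follows because periodicity forces $e_{e+1} = e_{1}$, so that $\head(e_{e}) = \tail(e_{e+1}) = \tail(e_{1})$; and the block is aperiodic precisely when $e$ is the minimal period. This gives a bijection between closed walks of length $d$ with minimal period $e$ and aperiodic closed walks of length $e$.

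Let $A(e)$ denote the number of aperiodic closed walks of length $e$. The decomposition above sorts all closed walks of length $d$ by their minimal period, yielding
$$ W(d) = \sum_{e \mid d} A(e). $$
Since $d \le n$ forces every divisor $e$ to satisfy $e \le n$, Theorem~\ref{theorem_closed_walks} gives $W(e) = \binom{2e}{e}$ for each such $e$, and M\"obius inversion produces
$$ A(d) = \sum_{e \mid d} \mu\!\left(\ffrac{d}{e}\right) W(e) = \sum_{e \mid d} \mu\!\left(\ffrac{d}{e}\right) \binom{2e}{e}. $$
Finally, since the aperiodic closed walks of length $d$ fall into orbits of size exactly $d$ under cyclic shifting, and these orbits are exactly the $d$-cycles, the number of $d$-cycles is $A(d)/d$, which is the claimed formula.

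A remark on the main obstacle: the genuine combinatorial work is already packaged in Theorem~\ref{theorem_closed_walks}, so what remains is entirely formal. The only point requiring care is the orbit-counting bookkeeping, namely verifying that the stabilizer of a closed walk is generated by its minimal period and that the period-$e$ walks biject with aperiodic length-$e$ walks; but this is routine and identical to the argument behind Equation~\eqref{equation_de_Bruijn}. I would keep the writeup short and emphasize that the only input specific to the $312$-avoiding setting is the value $\binom{2e}{e}$ of $W(e)$.
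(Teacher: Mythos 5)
Your proposal is correct and is essentially the paper's own argument: the paper likewise decomposes each closed walk of length $d$ as a $\ffrac{d}{e}$-fold repetition of an $e$-cycle with a chosen starting point, obtaining $\binom{2d}{d} = \sum_{e|d} e\cdot h(e)$ (your $A(e) = e\cdot h(e)$ counts the same objects as aperiodic walks), and then applies M\"obius inversion. Your writeup just makes the stabilizer/orbit bookkeeping more explicit than the paper does.
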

\begin{proof}
  Let $h(d)$ denote the number of $d$-cycles.  A closed walk of length $d$ can be obtained by choosing a divisor $e$ of $d$, an $e$-cycle and a starting point on the cycle. By repeating the $e$-cycle $d/e$ times we obtain a closed walk of length $d$.  Hence, we have
$$   \binom{2d}{d} 
= \sum_{e|d} e \cdot h(e) . $$ The result now follows by classical M\"obius inversion.
\end{proof}

\section{The bijection}\label{sec-bijection}

It remains to show that the number of closed walks of length $d$ in $G(n,312)$ is given by $\binom{2d}{d}$.  We do this by constructing a bijection between closed walks and enriched cyclic compositions.
The proof of this bijection involves bi-infinite sequences.
Let $W_{d}$ denote the set of all closed walks of length~$d$ in the graph $G(n,312)$.

Given a cyclic composition $\mP$ on $\Zzz_{d}$, we enrich each part of size $a$ either with a $312$-avoiding indecomposable permutation from the symmetric group $\SSSS_{a}$, or, if $a=1$, with the symbol ${\bf D}$.
The symbol ${\bf D}$ stands for ``Down''
and it will be used for the part of the
bi-infinite sequence that is decreasing.
Note that if $a=1$, then the enrichment is either the identity permutation $1$ in $\SSSS_{1}$ or the symbol~${\bf D}$.  Let $E_{d}$ denote the set of all these enriched cyclic compositions.  Note that the number of enrichments of a part of size $a$ is the Catalan number $C_{a-1}$ plus the Kronecker delta $\delta_{a,1}$.  Hence, by Lemma~\ref{lemma_central_binomial}, we know that the total number of these structures is the central binomial coefficient~$\binom{2d}{d}$.

We now describe a bijection $\Phi : E_{d} \longrightarrow W_{d}$.
Let $\mP = (\mB_1,\mB_2,\ldots,\mB_k)$ 
be an enriched cyclic composition in $E_{d}$.  Recall that the $i$th block $\mB_{i}$ is the image of the interval $[p_{i},q_{i}]$ under the quotient map $\Zzz \longrightarrow \Zzz_{d}$.  If the enrichment on the part $\mB_{i}$ is a permutation, we view it as a permutation $\pi_{i}$ on the set $[p_{i},q_{i}]$.  Let~$\overline{\mB_{i}}$ be the set $\overline{\mB_{i}} = \bigcup_{j \in \Zzz} [p_{i} + j d, q_{i} + j d]$.  Note that $\overline{\mB_{1}}, \overline{\mB_{2}}, \ldots, \overline{\mB_{k}}$ is a partition of the integers~$\Zzz$.  Furthermore, extend $\pi_{i}$ by the relation $\pi_{i}(j+d) = \pi_{i}(j)+d$.  That is, now $\pi_{i}$ is a bijection on the set~$\overline{\mB_{i}}$.
Construct a bi-infinite sequence~$f$ by
$$     f(j)
=
\begin{cases}
              \exp(\pi_{i}(j)) & \text{ if } j \in \overline{\mB_{i}} 
                               \text{ and part $\mB_{i}$ is enriched with a permutation,} \\
             -\exp(j) & \text{ if } j \in \overline{\mB_{i}} 
                               \text{ and part $\mB_{i}$ is enriched with the symbol ${\bf D}$.}
                             \end{cases}
$$
By construction, the bi-infinite sequence $f$ is $d$-periodic.
It also is $312$-avoiding.
\begin{claim}
The bi-infinite sequence $f$ is $312$-avoiding.
\end{claim}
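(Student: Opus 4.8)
The plan is to argue by contradiction. A $312$-pattern in $f$ is a triple of indices $i<j<k$ with $\Pi(f(i)f(j)f(k))=312$, that is, with $f(j)<f(k)<f(i)$; I assume such a triple exists and derive a contradiction through a case analysis on whether each of the three positions lies in a permutation block or in a $\mathbf{D}$ block.

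First I would record the structural features that drive the argument. For a position $a$ lying in a permutation block $\mB_{i}$, write $\pi(a):=\pi_{i}(a)$ for the integer rank recorded by $f(a)=\exp(\pi(a))$; these values are positive, whereas every value $f(a)=-\exp(a)$ coming from a $\mathbf{D}$ block is negative, so positive values dominate negative ones. Moreover, along $\mathbf{D}$ positions $f$ is strictly decreasing, since $a<b$ gives $-\exp(a)>-\exp(b)$. The second feature concerns the permutation blocks: the translates $[p_{i}+md,q_{i}+md]$ (over all blocks $i$ and all $m\in\Zzz$), whose union over $m$ is $\overline{\mB_{i}}$, partition $\Zzz$ into consecutive intervals, which I will call \emph{cells}, and the extended map $\pi_{i}$ sends each permutation cell onto itself. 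Consequently, if $a$ and $b$ lie in permutation blocks but in \emph{different} cells, then $a<b$ forces $\pi(a)<\pi(b)$: the smaller index sits in the cell made up of smaller integers, and $\pi$ keeps each rank inside its own cell. The contrapositive is the key lever: an inversion $\pi(a)>\pi(b)$ with $a<b$ forces $a$ and $b$ into a common cell, and hence forces every index strictly between them into that cell as well, since cells are intervals.

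With these in hand the case analysis is short. If $f(i)\leq0$ then all three values are negative, so all three positions are $\mathbf{D}$ positions; as $f$ is decreasing there we get $f(i)>f(j)>f(k)$, contradicting $f(j)<f(k)$. Hence $f(i)>0$. If in addition $f(k)\leq0$, then $f(j)<f(k)\leq0$, so $j$ and $k$ are both $\mathbf{D}$ positions and $j<k$ yields $f(j)>f(k)$, again a contradiction. There remains the case $f(k)>0$: then $k$ lies in a permutation block, and since $f(i)>f(k)$ with $i<k$ we have $\pi(i)>\pi(k)$, so the lever places $i$ and $k$ in a single cell and therefore $j$ in that cell too. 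But then $j$ is a permutation position, and $f(j)<f(k)<f(i)$ reads $\pi(j)<\pi(k)<\pi(i)$, a $312$-pattern inside the $312$-avoiding permutation enriching that cell---a contradiction. This exhausts all cases, so $f$ avoids $312$.

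The main obstacle is the order-preservation step across cells and its use to collapse the three positions into one cell; once that is set up, the sign bookkeeping for the $\mathbf{D}$ blocks is routine. The point requiring care is the mixed configuration in which the middle value $f(k)$ is positive while the smallest value $f(j)$ is negative, since this still presents as a $312$-pattern and must be eliminated by the same-cell argument rather than by a comparison of signs alone.
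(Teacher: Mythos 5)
Your proof is correct and takes essentially the same approach as the paper's: a contradiction argument split on the signs of the pattern's values, using that the negative ($\mathbf{D}$) entries of $f$ form a decreasing sequence, that positive entries lying in distinct cells appear in increasing order (so an inversion among positive entries forces a common cell), and that within a single cell the enriching permutation is $312$-avoiding. The only cosmetic difference is that you organize the cases by the signs of $f(i)$ and $f(k)$, whereas the paper conditions on the signs of the middle and last values of the pattern.
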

Assume not, that is, there are three integers $x<y<z$ such that $f(y) < f(z) < f(x)$.  Assume that $f(y) < 0 < f(z)$.  Then both $f(x)$ and $f(z)$ are positive.  Since $f(y)$ is negative, $x$ and $z$ belong to different intervals of the form $\mB_{i} + j \cdot d$.  But between these types of intervals $f$ is increasing, yielding the contradiction $f(x) < f(z)$.  Hence if $f(y)$ is negative then so is $f(z)$.  But the negative values of $f$ form a decreasing sequence, since this is a subsequence of $-\exp(j)$.  This contradicts $f(y) < f(z)$.  Now assume that $f(y) > 0$.  Since $f(x) > f(y)$, $x$ has to belong to the same interval $\mB_{i} + j \cdot d$ as $y$.  Similarly, since $f(x) > f(z)$, $z$~has to belong to the same interval $\mB_{i} + j \cdot d$ as $x$.  This contradicts to the fact that the block $\mB_{i}$ was enriched with a $312$-avoiding permutation,
proving the claim.

Finally, we construct an infinite walk $(\ldots,\sigma_{-1},\sigma_{0},\sigma_{1},\sigma_{2}, \ldots)$ in the graph $G(n,312)$ by letting $\sigma_{i}$ be the standardization $\Pi(f(i), f(i+1), \ldots, f(i+n))$.  Note that this permutation is $312$-avoiding and as an edge in the graph, it has the tail $\Pi(f(i), f(i+1), \ldots, f(i+n-1))$ and the head $\Pi(f(i+1), f(i+2), \ldots, f(i+n))$.  Since $f$ is $d$-periodic the infinite walk has period $d$.  Restricting the walk to $(\sigma_{1}, \sigma_{2}, \ldots, \sigma_{d})$ gives a closed walk in the set $W_{d}$.  This completes the description of the map $\Phi$.

\begin{example}
As an example of how the bijection $\Phi$ works, consider the cyclic composition of $d=8$ consisting of the interval $[-1,1]$ and the five singletons $\{2\}, \{3\}, \{4\}, \{5\}, \{6\}$.  Enrich the singletons $\{2\}$, $\{3\}$ and $\{5\}$ with the symbol ${\bf D}$. Enrich the interval $[-1,1]$ with the permutation $231$ and the two remaining singletons with the permutation $1$.  Then a graphical representation of the associated sequence $f$ is in Figure~\ref{figure_four}.  In the graph $G(8,312)$ the sequence $f$ describes the $8$-cycle:
$$ 45362178, 43521786, 34216758, 43267581,
32564718, 35647281, 56473821, 54637218 . $$ In the graph $G(9,312)$ the sequence $f$ describes the $8$-cycle whose three first edges are $453621897$, $435217869$ and $453278691$.
\label{example_ex}
\end{example}

\begin{figure}
\setlength{\unitlength}{0.4 mm}
\begin{center}
  \begin{picture}(150,200)(0,-80) \thicklines

\newcommand{\pppp}{\circle*{3.0}}

\qbezier(-5,15)(-2.5,27.5)(0,40)
\put(0,40){\pppp}
\qbezier(0,40)(5,45)(10,50)
\put(10,50){\pppp}
\qbezier(10,50)(15,40)(20,30)
\put(20,30){\pppp}
\qbezier(20,30)(25,45)(30,60)
\put(30,60){\pppp}
\qbezier(30,60)(35,20)(40,-20)
\put(40,-20){\pppp}
\qbezier(40,-20)(45,45)(50,70)
\put(50,70){\pppp}
\qbezier(50,70)(55,20)(60,-30)
\put(60,-30){\pppp}
\qbezier(60,-30)(65,-35)(70,-40)
\put(70,-40){\pppp}
\qbezier(70,-40)(75,25)(80,90)
\put(80,90){\pppp}
\qbezier(80,90)(85,95)(90,100)
\put(90,100){\pppp}
\qbezier(90,100)(95,90)(100,80)
\put(100,80){\pppp}
\qbezier(100,80)(105,95)(110,110)
\put(110,110){\pppp}
\qbezier(110,110)(115,30)(120,-50)
\put(120,-50){\pppp}
\qbezier(120,-50)(125,35)(130,120)
\put(130,120){\pppp}
\qbezier(130,120)(135,30)(140,-60)
\put(140,-60){\pppp}
\qbezier(140,-60)(145,-65)(150,-70)
\put(150,-70){\pppp}
\qbezier(150,-70)(152.5,-17.5)(155,35)

\put(-3,0){
\put(0,-80){\tiny -1}
\put(10,-80){\tiny 0}
\put(20,-80){\tiny 1}
\put(30,-80){\tiny 2}
\put(40,-80){\tiny 3}
\put(50,-80){\tiny 4}
\put(60,-80){\tiny 5}
\put(70,-80){\tiny 6}
\put(80,-80){\tiny 7}
\put(90,-80){\tiny 8}
\put(100,-80){\tiny 9}
\put(110,-80){\tiny 10}
\put(120,-80){\tiny 11}
\put(130,-80){\tiny 12}
\put(140,-80){\tiny 13}
\put(150,-80){\tiny 14}
}

\end{picture}
\end{center}
\caption{A schematic representation of the sequence appearing in Example~\ref{example_ex}.}
\label{figure_four}
\end{figure}
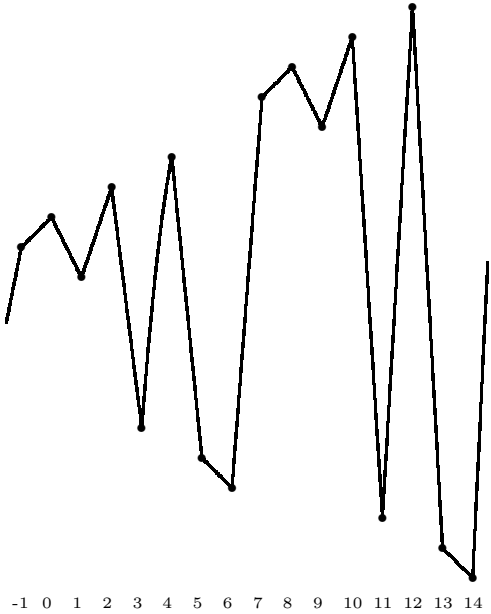

\section{The inverse bijection}
\label{sec-inv-bijection}

One can always lift an infinite walk in the graph to a bi-infinite sequence.  However, as Remark~\ref{remark_be_careful} shows, an infinite walk could lift to several non-equivalent sequences, and they do not all have the desired properties.  Thus, when lifting a walk to a sequence we have the additional requirements in
Conditions~\eqref{equation_extra_condition}
and~\eqref{equation_extra_condition_2}.
Their interpretation is that we do not introduce an inversion in the bi-infinite sequence, unless we are required to do so by a local condition.
\begin{remark}
{Consider the two bi-infinite sequences $$ h_{1}(n) = n + (-1)^{n} \:\:\:\: \text{ and } \:\:\:\: h_{2}(n) = n+2 (-1)^{n}.  $$ Observe that they both encode the same $2$-cycle in $G(2,312)$.  That is, $$ \Pi(h_{1}(n),h_{1}(n+1),h_{1}(n+2)) = \Pi(h_{2}(n),h_{2}(n+1),h_{2}(n+2)) = \begin{cases}
     132 & \text{if $n$ is odd,} \\
     213 & \text{if $n$ is even.}
   \end{cases}
$$
However, note that $h_{2}$ is not $312$-avoiding, whereas $h_{1}$ is.  Furthermore, $h_{2}$ does not have any cut points, whereas $h_{1}$ does.  Hence, when constructing the inverse map to $\Phi$ we have to be careful in constructing a bi-infinite sequence which is $312$-avoiding. 
}
\label{remark_be_careful}
\end{remark}

We now construct the inverse map of $\Phi$.  Given the closed walk $(\sigma_{1}, \sigma_{2}, \ldots, \sigma_{d})$ in $W_{d}$ we extend it to an infinite walk by letting $\sigma_{j+d} = \sigma_{j}$ for all integers $j$.

We are going to find a sequence $\ldots, g(-1), g(0), g(1), g(2), \ldots$ such that $\Pi(g(i),g(i+1), \ldots, g(i+n)) = \sigma_{i}$ for all integers $i$.  To find such a sequence, let $g(k) = \sigma_{1}(k)$ for $1 \leq k \leq n+1$.  Now alternate the following two steps to extend $g$ to all of the integers.
\begin{itemize}
\item[(+)] Assume that we have picked the values $g(i), g(i+1), \ldots, g(j-1)$ of the sequence.  We will now pick the value of $g(j)$.  That is, we are extending the sequence in the positive direction.  Let $\sigma$ be the permutation $\sigma_{i-n}$.  Let $a$ and $b$ be the real numbers (including $\pm \infty$) given by
  \begin{align*}
a & =
\begin{cases}
  g(\sigma^{-1}(\sigma(n+1)-1)+s) & \text{if } \sigma(n+1) > 1,\\
  -\infty & \text{if } \sigma(n+1) = 1,\\
\end{cases}  \\
b & =
\begin{cases}
  g(\sigma^{-1}(\sigma(n+1)+1)+s) & \text{if } \sigma(n+1) < n+1,\\
  \infty & \text{if } \sigma(n+1) = n+1,\\
\end{cases}
\end{align*}
where $s$ is the shift $s = j-n-1$.  Then any real number $x$ in the open interval $(a,b)$ satisfies $\Pi(g(j-n), \ldots, g(j-1), x) = \sigma$.  However, we have one more requirement, we will pick $x$ as large as possible with respect to the already picked values $g(i), g(i+1), \ldots, g(j-n-1)$.  That is, we pick $g(j) = x$ such that
\begin{equation}
  \max(a, \{g(k) \: : \: g(k) < b, i \leq k \leq j-1\}) < x < b .
  \label{equation_extra_condition}
\end{equation}
Note that we are picking a real number $g(j) = x$
in an open interval.
If the interval is bounded, it is fine to use
the average of the two endpoints.
However, any number in the open interval will do,
since there are plenty (read infinite) of real numbers in the interval
both less than and greater than our pick.

\item[(--)] Now we extend the sequence in the negative direction.  Assume that we have picked the values $g(i+1), g(i+2), \ldots, g(j)$ of the sequence.  We will now pick the value of $g(i)$.  Let $\sigma$ now denote the permutation $\sigma_{i}$.  Let the two bounds $a$ and $b$ be given by
  \begin{align*}
  a & =
  \begin{cases}
  g(\sigma^{-1}(\sigma(1)-1)+s) & \text{if } \sigma(1) > 1,\\
  -\infty & \text{if } \sigma(1) = 1,\\
\end{cases}  \\
b & =
\begin{cases}
  g(\sigma^{-1}(\sigma(1)+1)+s) & \text{if } \sigma(1) < n+1,\\
  \infty & \text{if } \sigma(1) = n+1,\\
\end{cases}
\end{align*}
where $s = i-1$.  Yet again, any real number $x$ in the open interval $(a,b)$ satisfies $\Pi(x, g(i+1), \ldots, g(i+n)) = \sigma$.  However, now we pick $x$ as small as possible with the already picked values $g(i+1), \ldots, g(j)$.  That is, we pick $g(i) = x$ such that
\begin{equation}
  a < x < \min(b, \{g(k) \: : \: g(k) > a, i+1 \leq k \leq j\})  
  \label{equation_extra_condition_2}
\end{equation}
\end{itemize}

The purpose of the two conditions
in~\eqref{equation_extra_condition}
and~\eqref{equation_extra_condition_2} is to avoid introducing any extra inversions in the sequence $g$.  These conditions will come into play at the end of this construction in the case when the set $D$ (to be defined soon) is empty.

\begin{claim}
  The sequence $g$ is locally $312$-avoiding, that is, if $i < j < k$, where $k-i \leq n$ then $\Pi(g(i),g(j),g(k)) \neq 312$.
  \label{claim_one}
\end{claim}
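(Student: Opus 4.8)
The plan is to exploit the fact that any $n+1$ consecutive terms of $g$ standardize to one of the $312$-avoiding permutations $\sigma_m$, and then invoke the hereditary nature of pattern avoidance. First I would record the invariant that the construction is designed to produce: for every integer $m$ one has $\Pi(g(m),g(m+1),\ldots,g(m+n)) = \sigma_m$. This holds for $m=1$ by the initialization $g(k)=\sigma_1(k)$, since $\sigma_1\in\SSSS_{n+1}$ is its own standardization. Each application of step $(+)$ or step $(-)$ then chooses the new value of $g$ inside the open interval $(a,b)$, which is characterized as exactly the set of reals completing the newly closed window to the standardization prescribed by the corresponding $\sigma$; an induction on the number of extension steps gives the invariant for all $m$. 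Because every $\sigma_m$ is an edge of $G(n,312)$, it is a $312$-avoiding permutation in $\SSSS_{n+1}$.

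Next I would reduce the claim to this invariant. Given indices $i<j<k$ with $k-i\le n$, all three lie in the window $[i,i+n]$, since $k\le i+n$. Hence $(g(i),g(j),g(k))$ is a subword of $(g(i),g(i+1),\ldots,g(i+n))$, whose standardization is $\sigma_i$. Under this standardization, index $i$ corresponds to position $1$, index $j$ to position $j-i+1$, and index $k$ to position $k-i+1$, so $\Pi(g(i),g(j),g(k)) = \Pi(\sigma_i(1),\sigma_i(j-i+1),\sigma_i(k-i+1))$, which is the pattern occurring in $\sigma_i$ at positions $1<j-i+1<k-i+1$. Since $\sigma_i$ avoids $312$ and $312$-avoidance is inherited by every subword, this length-three pattern cannot equal $312$, which is precisely the desired conclusion.

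I do not expect a genuine obstacle: the entire content is the windowing observation $k-i\le n$ together with heredity of pattern avoidance. The one point to handle cleanly is the bookkeeping behind the invariant $\Pi(g(m),\ldots,g(m+n))=\sigma_m$, namely verifying that the bounds $a$ and $b$ in steps $(+)$ and $(-)$ really characterize the reals that complete each window to the required permutation. I would also emphasize that the extra inequalities in \eqref{equation_extra_condition} and \eqref{equation_extra_condition_2} play no role in this \emph{local} statement; they serve only to control inversions across windows of span larger than $n$, and hence will matter only for the stronger global $312$-avoidance argument established afterwards.
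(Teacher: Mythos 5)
Your proof is correct and takes essentially the same route as the paper, whose entire argument is the one-line observation that $\sigma_{i}$ is $312$-avoiding: your windowing reduction (all of $i<j<k$ lie in $[i,i+n]$) plus heredity of pattern avoidance is exactly what that line compresses. The additional verification of the invariant $\Pi(g(m),\ldots,g(m+n))=\sigma_{m}$, and the remark that Conditions~\eqref{equation_extra_condition} and~\eqref{equation_extra_condition_2} are irrelevant to this local statement, are sound bookkeeping that the paper leaves implicit in the construction of steps $(+)$ and $(-)$.
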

This holds true since $\sigma_{i}$ is $312$-avoiding.

\begin{claim}
For all $i$ we have $g(i) \neq g(i+d)$.
\end{claim}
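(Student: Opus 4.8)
The plan is to reduce the claim to the injectivity of the constructed sequence $g$, and then read it off as the special case of two indices that differ by $d$. Since $d \geq 1$ we have $i \neq i+d$, so it suffices to prove that $m \neq m'$ implies $g(m) \neq g(m')$. First I would check that the construction is well defined and eventually assigns a value to every integer: the initial block $\{1,\ldots,n+1\}$ is enlarged by alternating the $(+)$ and $(-)$ steps, each of which extends the contiguous block of already-assigned indices by exactly one endpoint, so after finitely many steps any given $t \in \Zzz$ lies in the current block and $g(t)$ is defined. I would also note that the block always has size at least $n+1$, so in a $(+)$ step the whole left part $[j-n,j-1]$ of the window is already assigned and the relevant open interval is nondegenerate (its endpoints satisfy $a < b$ and the lower cutoff is a maximum over a finite set of reals strictly below $b$); the $(-)$ step is symmetric.

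The crux is the distinctness of each newly chosen value from all previously chosen ones, and here I would use that at every stage the assigned indices form a single block $[\ell,r]$ and that the comparison sets in \eqref{equation_extra_condition} and \eqref{equation_extra_condition_2} range over this entire block. In a $(+)$ step we choose $x = g(j)$ with $x < b$ and with $x$ strictly larger than every already-assigned value that is smaller than $b$. Splitting the assigned values by a trichotomy against $b$ then settles distinctness: values strictly below $b$ are strictly exceeded by $x$; the unique assigned value equal to $b$, namely $g(\sigma^{-1}(\sigma(n+1)+1)+s)$, is excluded because $x < b$; and any value above $b$ satisfies $x < b < g(k)$. Hence $g(j)$ differs from every earlier value. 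The $(-)$ step is handled identically by the symmetric trichotomy against $a$ in \eqref{equation_extra_condition_2}. Equivalently, and most directly for the claim as stated: for a fixed $i$, both $i$ and $i+d$ are assigned at some finite stage, and whichever is assigned second is chosen strictly inside an open interval that, by the trichotomy above, avoids every earlier value, in particular the value at the other index. Therefore $g(i) \neq g(i+d)$.

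I expect the only delicate point to be the bookkeeping in this trichotomy: one must confirm that the single previously-chosen value equal to the bound $b$ (resp.\ $a$) is genuinely already assigned and is excluded precisely by the strict inequality $x < b$ (resp.\ $x > a$), and that the comparison set in the greedy condition is the whole current block $[\ell,r]$ rather than merely the local window $[j-n,j-1]$. Once these are pinned down the argument is routine. I would also remark that the greedy ``as large/small as possible'' prescription plays no role in \emph{this} claim — any choice strictly inside the open interval avoids the finitely many earlier values, so distinctness is automatic; the greedy rule is imposed only for the later periodicity and global $312$-avoidance arguments. Finally I would point out that the phrasing in terms of the shift $d$ is the natural one for what follows, since $\sigma_{i+d}=\sigma_i$ forces the windows at $i$ and $i+d$ to carry the same pattern, and establishing $g(i)\neq g(i+d)$ guarantees that $g(i)$ and $g(i+d)$ are comparable, which is exactly what the subsequent verification of $d$-periodicity of the inversion set will need.
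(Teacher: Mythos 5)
Your proof is correct, but it takes a genuinely different and considerably heavier route than the paper's. The paper disposes of this claim in one sentence: since $d \leq n$, both $g(i)$ and $g(i+d)$ are entries of the single window $(g(i), g(i+1), \ldots, g(i+d), \ldots, g(i+n))$, whose standardization is the permutation $\sigma_{i}$, and the entries of a tuple that standardizes to a permutation are distinct --- nothing more is needed. You instead prove the stronger statement that $g$ is globally injective, via the trichotomy against the bound $b$ (resp.\ $a$) built into Conditions~\eqref{equation_extra_condition} and~\eqref{equation_extra_condition_2}, and then specialize to the pair $i, i+d$. Your trichotomy is valid: any $x$ admitted by~\eqref{equation_extra_condition} strictly exceeds every previously assigned value below $b$, is strictly below $b$ itself, and is strictly below every assigned value above $b$, so each new value avoids the entire current block. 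What your approach buys is generality: it never uses $d \leq n$, so it rules out $g(i) = g(i+d)$ for \emph{any} shift, and global injectivity is a cleaner property to carry forward. What the paper's approach buys is economy: it needs only the defining property $\Pi(g(i), \ldots, g(i+n)) = \sigma_{i}$ and is completely independent of the greedy conditions. One caveat: your closing remark that the greedy prescription ``plays no role in this claim'' sits in tension with your own argument, which leans on exactly those conditions (the trichotomy requires $x$ to exceed every assigned value below $b$ over the whole block, not merely over the window $[j-n,j-1]$; a value chosen merely in $(a,b)$ could collide with an assigned value outside the window). That remark is salvageable only by switching to the paper's local argument: because $d \leq n$, the indices $i$ and $i+d$ share a window, and within a window any $x \in (a,b)$ is automatically distinct from the other entries, since $a$ and $b$ are consecutive order statistics of the window values. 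In other words, the greedy rule is dispensable for this claim only in conjunction with the hypothesis $d \leq n$, which your main argument never invokes.
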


Since $d \leq n$ and $\Pi(g(i), g(i+1), \ldots, g(i+d), \ldots, g(i+n))$ is a permutation, $g(i)$ and $g(i+d)$ are distinct.

\begin{claim}
For $i < j$ and $j-i < n$
the inequality
$g(i) < g(j)$
is equivalent to
$g(i+d) < g(j+d)$.
\label{claim_two}
\end{claim}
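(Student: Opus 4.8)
The plan is to deduce this local periodicity directly from the two facts that the construction is designed to guarantee: that $\Pi(g(m),g(m+1),\ldots,g(m+n)) = \sigma_{m}$ holds for every integer $m$, and that the walk itself is $d$-periodic, i.e.\ $\sigma_{m+d} = \sigma_{m}$ for all $m$. The first of these is the defining property of the sequence $g$: at each step the value $g(j)$ (respectively $g(i)$) is chosen inside the open interval $(a,b)$ precisely so that the corresponding window standardizes to the prescribed permutation, and the extra conditions \eqref{equation_extra_condition} and \eqref{equation_extra_condition_2} only shrink this interval, so they do not disturb the standardization. Hence I may take the window identity as given.

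The key observation is that when $i < j$ and $j - i < n$, the two indices $i$ and $j$ lie inside a single window of length $n+1$, namely $\{i, i+1, \ldots, i+n\}$, since $j \leq i + n - 1$. Under the identity $\Pi(g(i),g(i+1),\ldots,g(i+n)) = \sigma_{i}$, the entry $g(i+\ell)$ has rank $\sigma_{i}(\ell + 1)$ among the $n+1$ values of the window, so $g(i)$ matches $\sigma_{i}(1)$ and $g(j)$ matches $\sigma_{i}(j-i+1)$. Consequently
\[
  g(i) < g(j) \quad\Longleftrightarrow\quad \sigma_{i}(1) < \sigma_{i}(j-i+1).
\]
Applying the identical reasoning to the window $\{i+d, \ldots, i+d+n\}$, which contains both $i+d$ and $j+d$ because $(j+d) - (i+d) = j - i < n$, gives
\[
  g(i+d) < g(j+d) \quad\Longleftrightarrow\quad \sigma_{i+d}(1) < \sigma_{i+d}(j-i+1).
\]

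To finish, I would invoke the periodicity $\sigma_{i+d} = \sigma_{i}$, which makes the right-hand sides of the two displayed equivalences literally the same condition on $\sigma_{i}$; the claim then follows at once. I do not expect any real obstacle here: the only point requiring a moment's care is checking that both pairs of indices genuinely sit in a common window, which is exactly where the hypothesis $j - i < n$ is used, and noting that the refinement conditions \eqref{equation_extra_condition} and \eqref{equation_extra_condition_2} play no role in this particular claim (they are reserved, as the preceding remark indicates, for the later step where the set $D$ is empty).
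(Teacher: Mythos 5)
Your proof is correct and follows essentially the same route as the paper: both arguments reduce the comparison $g(i)$ vs.\ $g(j)$ to the comparison $\sigma_{i}(1)$ vs.\ $\sigma_{i}(j-i+1)$ inside a single window of length $n+1$, do the same for the shifted window, and conclude via the periodicity $\sigma_{i}=\sigma_{i+d}$. The paper merely compresses this into one chain of equivalences, while you spell out the window-containment checks explicitly.
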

Since $\sigma_{i} = \sigma_{i+d}$
we have the string of the equivalences
$g(i) < g(j)
  \Longleftrightarrow 
\sigma_{i}(1) < \sigma_{i}(j-i+1)
  \Longleftrightarrow 
\sigma_{i+d}(1) < \sigma_{i+d}(j-i+1)
  \Longleftrightarrow 
g(i+d) < g(j+d)$.

Hence, the bi-infinite sequence $g$ decomposes into $d$ sequences, each of which is monotone.  We now partition the integers $\Zzz$ into two sets $D = \{i \: : \: g(i) > g(i+d)\}$ and $U = \{i \: : \: g(i) < g(i+d)\}$.  Note that since $d \leq n$ we have that $i \in D$ is equivalent to $i+d \in D$.  That is, $D$ consists of the sequences that are decreasing and $U$ of the increasing sequences.

\begin{claim}
The subsequence $\{g(i)\}_{i \in D}$ is decreasing.
\label{claim_three}
\end{claim}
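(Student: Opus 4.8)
The plan is to reduce the statement, via the periodic structure of $g$, to a single cross-class comparison that is then settled by a $312$-pattern argument. First I would record what is already available. Since $d \le n$, membership in $D$ is governed locally: $i \in D$ means $g(i) > g(i+d)$, and $i \in D$ is equivalent to $i+d \in D$, so $D$ is a union of residue classes modulo $d$. Within one such class the values strictly decrease, $\cdots > g(i) > g(i+d) > g(i+2d) > \cdots$, by the very definition of $D$. Hence for two indices of the same residue the desired inequality is immediate, and the whole problem is to compare indices lying in two different residue classes.

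Next I would peel off full periods by induction on $j-i$. Given $i<j$ in $D$ with $j-i \ge d$, the index $j-d$ again lies in $D$ (the same class as $j$), satisfies $i \le j-d < j$, and obeys $g(j-d) > g(j)$ by within-class monotonicity; an induction on the gap then reduces everything to the base case $0 < j-i < d$, in which $i$ and $j$ necessarily lie in distinct residue classes. So it suffices to prove the following: if $i<j$ are both in $D$ and $j-i<d$, then $g(i) > g(j)$.

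The crux, and the step I expect to be the only real obstacle, is this base case, and the idea is to manufacture a forbidden pattern by invoking the shifted indices $i+d$ and $j+d$. Suppose to the contrary that $g(i) < g(j)$. Since $j-i < d \le n$, Claim~\ref{claim_two} upgrades this to $g(i+d) < g(j+d)$. Because $j \in D$ we also have $g(j+d) < g(j)$, so altogether $g(i+d) < g(j+d) < g(j)$. Now the three indices $j < i+d < j+d$ all lie in the window standardized by $\sigma_{j} = \Pi(g(j), g(j+1), \ldots, g(j+n))$: indeed $(i+d)-j = d-(j-i) < d \le n$ and $(j+d)-j = d \le n$. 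Reading off the values in index order, the entry at $j$ is the largest, the entry at $i+d$ the smallest, and the entry at $j+d$ the middle one, so $\Pi(g(j), g(i+d), g(j+d)) = 312$, contradicting that $\sigma_{j}$ is $312$-avoiding. This forces $g(i) > g(j)$ and completes the base case, hence the whole claim.

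Finally, I would point out that this argument uses only the periodicity recorded in Claim~\ref{claim_two} together with the $312$-avoidance of the walk; the greedy choices imposed by~\eqref{equation_extra_condition} and~\eqref{equation_extra_condition_2} are not needed here, consistent with the earlier remark that those conditions come into play only in the case when $D$ is empty.
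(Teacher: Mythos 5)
Your proof is correct and takes essentially the same route as the paper: reduce to a violating pair $i<j$ in $D$ with $j-i<d$, then produce a forbidden $312$ pattern inside a window of length at most $n+1$ via the period-$d$ shift. The paper's triple is $(j-d,\, i,\, j)$ with $g(j-d)>g(j)>g(i)$, using $j-d\in D$ directly; your triple $(j,\, i+d,\, j+d)$ is the same one translated by $d$, at the minor extra cost of invoking Claim~\ref{claim_two}.
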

Assume that
$\{g(i)\}_{i \in D}$
is not decreasing. Then there are two entries $i, j \in D$ such that $i < j$, $j-i \leq d-1$, and $g(i) < g(j)$.  Also, we have that $g(j-d) > g(j)$.
Combining the last two inequalities gives an
occurrence of $312$ since $\Pi(g(j-d), g(i), g(j)) = 312$.
This contradiction proves that
$\{g(i)\}_{i \in D}$
is decreasing.

\begin{claim}
  The values of the sequence $\{g(i)\}_{i \in D}$ are all smaller than the values of $\{g(j)\}_{j \in U}$.
  \label{claim_four}
\end{claim}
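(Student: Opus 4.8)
The plan is to argue by contradiction: assume the separation fails, so there are $i \in D$ and $j \in U$ with $g(i) > g(j)$, and then manufacture a forbidden $312$ pattern inside a window of length $d \le n$, contradicting Claim~\ref{claim_one}. Since membership in $D$ or $U$ depends only on a residue modulo $d$ (via the equivalence $i \in D \Leftrightarrow i+d \in D$ noted just before the claim), I would immediately restrict attention to the two arithmetic progressions through $i$ and $j$. Writing $p = i \bmod d$ and $q = j \bmod d$ (so $p \neq q$, as $D$ and $U$ are disjoint unions of residue classes), set $u_{\ell} = g(p + \ell d)$ and $v_{\ell} = g(q + \ell d)$. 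Directly from the definitions of $D$ and $U$, the column $u_{\ell}$ is strictly decreasing and the column $v_{\ell}$ is strictly increasing in $\ell$.

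The key observation is that the set $S = \{\ell : u_{\ell} > v_{\ell}\}$ is a nonempty down-set. It is nonempty because the failing pair gives $u_{a} > v_{b}$ for suitable $a,b$, and monotonicity of the two columns then forces $u_{\min(a,b)} > v_{\min(a,b)}$; it is downward closed because $\ell' < \ell$ yields $u_{\ell'} > u_{\ell} > v_{\ell} > v_{\ell'}$. Hence $S$ contains two consecutive integers $\ell$ and $\ell+1$. For such an $\ell$ I have simultaneously $u_{\ell} > v_{\ell}$ and $u_{\ell+1} > v_{\ell+1}$, and combining these with $u_{\ell} > u_{\ell+1}$ and $v_{\ell} < v_{\ell+1}$ gives the chain $v_{\ell} < v_{\ell+1} < u_{\ell+1} < u_{\ell}$.

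With two ``both-above'' consecutive periods secured, I would read off the pattern, splitting on the order of the residues. If $p < q$, the indices $p+\ell d < q+\ell d < p+(\ell+1)d$ carry the values $u_{\ell}, v_{\ell}, u_{\ell+1}$ with $u_{\ell} > u_{\ell+1} > v_{\ell}$, which standardizes to $312$. If $p > q$, the indices $p+\ell d < q+(\ell+1)d < p+(\ell+1)d$ carry the values $u_{\ell}, v_{\ell+1}, u_{\ell+1}$ with $u_{\ell} > u_{\ell+1} > v_{\ell+1}$, again standardizing to $312$. In both cases the three indices span exactly $d \le n$, so the offending pattern sits inside a single window and contradicts Claim~\ref{claim_one}, completing the proof.

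I expect the main obstacle to be exactly the reasoning that forces the choice of two consecutive periods of $S$. A naive attempt anchored at a single ``crossing'' period, where $u_{\ell} > v_{\ell}$ but $u_{\ell+1} < v_{\ell+1}$, fails: there the four corner values only ever standardize to the harmless patterns $321$, $231$, $132$, or $123$, never to $312$. The subtlety is that shifting an index by $d$ moves index and value together, so one cannot separately arrange ``small window'' and ``correct value order''; one must instead find a location where the decreasing column sits strictly above the increasing column across \emph{two} adjacent periods. The down-set structure of $S$ is precisely what guarantees such a location exists, and once it is isolated the two residue-order cases are routine.
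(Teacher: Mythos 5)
Your proof is correct and takes essentially the same route as the paper: both arguments use the monotonicity of the $D$- and $U$-columns to manufacture a $312$ pattern of the form (D-entry, U-entry, D-entry one period later) spanning exactly $d \leq n$ positions, contradicting Claim~\ref{claim_one}. The only difference is bookkeeping: you localize the violation via the down-set $S$ of crossing periods, while the paper first handles pairs with $i < j < i+d$ and then reduces arbitrary pairs to that case by shifting indices by multiples of $d$ along the monotone columns.
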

We begin when $i$ and $j$ are close to each other, that is, when $i < j < i+d$, $i \in D$ and $j \in U$.  Assume that $g(i) > g(j)$.  Then we have the string of inequalities $g(i-d) > g(i) > g(j) > g(j-d)$ implying that $\Pi(g(i-d), g(j-d), g(i)) = 312$, a contradiction.  Hence, we conclude that $g(i) < g(j)$.  Now pick $i^{\prime} \in D$ and $j^{\prime} \in U$.  If $i^{\prime} < j^{\prime}$ let $i = i^{\prime}$ and $j = j^{\prime} - d \cdot \lfloor (j^{\prime} - i^{\prime})/d \rfloor$.  If $i^{\prime} > j^{\prime}$ let $i = i^{\prime} - d \cdot \lceil (i^{\prime} - j^{\prime})/d \rceil$ and $j = j^{\prime}$.  (Here $\lfloor\cdot\rfloor$ and $\lceil\cdot\rceil$ are the usual floor and ceiling functions.)  In both cases we have $i \in D$, $j \in U$ and $i < j < i+d$.  Furthermore, we have that $g(i^{\prime}) \leq g(i) < g(j) \leq g(j^{\prime})$, proving the claim.

Now assume that $D$ is non-empty.  The case when $D$ is empty requires an extra argument, which we postpone to the end of this section.  Pick $p_{1}$ to be an element in the set $D$.  Decompose the interval $[p_{1},p_{1}+d-1]$ of cardinality $d$ into smaller intervals, according to the rules:
\begin{itemize}
\item[(d)] If $i \in D \cap [p_{1},p_{1}+d-1]$ then let the singleton $\{i\} = [i,i]$ be an interval in the decomposition.  Moreover, enrich this singleton with the symbol ${\bf D}$.
\item[(u)] If $i \leq j$, $i-1,j+1 \in D$ and $[i,j] \subseteq U \cap [p_{1},p_{1}+d-1]$ then we use the argument at the end of Section~\ref{section_compositions} to decompose the interval $[i,j]$ into smaller intervals, each enriched with an indecomposable $312$-avoiding permutation.  That is, we use the permutation $\Pi(g(i), g(i+1), \ldots, g(j))$ to decompose the interval.
\end{itemize}
Let the decomposition of the interval $[p_{1},p_{1}+d-1]$ be $\{[p_{1},q_{1}], [p_{2},q_{2}], \ldots, [p_{r},q_{r}]\}$, where $q_{i} + 1 = p_{i+1}$.  Extend this decomposition to a decomposition $\{[p_{i}, q_{i}]\}_{i \in \Zzz}$ of the integers $\Zzz$ by letting $p_{i+r} = p_{i} + d$ and $q_{i+r} = q_{i} + d$.  Note that under the quotient map $\Zzz \longrightarrow \Zzz_{d}$ we obtain a cyclic composition.

\begin{claim}
  If the intervals $[p_{i},q_{i}]$ and $[p_{k},q_{k}]$ are not enriched with the symbol ${\bf D}$, $i < k$, $x \in [p_{i},q_{i}]$ and $z \in [p_{k},q_{k}]$ then we have $g(x) < g(z)$.
\end{claim}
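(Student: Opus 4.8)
The plan is to split on whether the two intervals lie in one common maximal run of $U$ or in two distinct runs, and in each case to turn a failure of $g(x)<g(z)$ into a forbidden $312$-pattern. First I would recall how the non-${\bf D}$ intervals arise: by rule~(u) each maximal run of $U$ inside $[p_{1},p_{1}+d-1]$ is the image of an interval of length at most $d\le n$, so the permutation $\tau=\Pi(g(i),\ldots,g(j))$ on that run is genuinely $312$-avoiding by Claim~\ref{claim_one}, and the non-${\bf D}$ intervals contained in the run are exactly the components of $\tau$ produced by the cut-point decomposition.

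If $[p_{i},q_{i}]$ and $[p_{k},q_{k}]$ are distinct components of one such $\tau$ with $i<k$, then by the defining property of cut points (the same observation used at the end of Section~\ref{section_compositions}, that a permutation $A_{1}A_{2}\cdots$ has every letter of an earlier block smaller than every letter of a later block) we get $g(x)<g(z)$ directly. If instead $[p_{i},q_{i}]$ and $[p_{k},q_{k}]$ lie in different maximal runs, then, since each run is maximal in $U$, there is an index $w\in D$ with $x<w<z$. By Claim~\ref{claim_four} we have $g(w)<g(x)$ and $g(w)<g(z)$. Were $g(x)>g(z)$, the triple $x<w<z$ would satisfy $g(w)<g(z)<g(x)$, that is $\Pi(g(x),g(w),g(z))=312$, contradicting the local $312$-avoidance of Claim~\ref{claim_one}, provided the three positions sit inside a single window of length $n+1$. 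Hence $g(x)<g(z)$.

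I expect the window condition to be the main obstacle. When $i<k$ both lie in the decomposition of the single period $[p_{1},p_{1}+d-1]$ we have $z-x\le d-1<d\le n$, so Claim~\ref{claim_one} applies verbatim. For intervals more than a period apart I would reduce to this case: since $z\in U$, the residue class of $z$ is increasing (using the definition of $U$ together with the already-noted equivalence $i\in U\Leftrightarrow i+d\in U$), so replacing $z$ by $z-md$ for the largest $m\ge 0$ with $z-md>x$ only decreases $g(z)$, keeps $z-md$ inside a non-${\bf D}$ interval by the $d$-periodicity of the decomposition, and yields $(z-md)-x\le d\le n$; thus $g(x)>g(z)$ would force $g(x)>g(z-md)$ on a local triple. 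The genuinely delicate point is that this shift must not silently collapse the two intervals into a single component without the inequality already holding, and it is precisely the extension rules~\eqref{equation_extra_condition} and~\eqref{equation_extra_condition_2}, which forbid introducing any inversion not locally forced, that rule out a surviving long-range inversion $g(x)>g(z)$. Verifying this last interaction, rather than the two principal cases, is where I would spend the most care.
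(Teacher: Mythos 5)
Your case analysis reproduces the paper's own proof of this claim: inside one maximal run of $U$ you invoke the component (cut point) decomposition from rule (u); across two different runs you place a ${\bf D}$-position $w$ between $x$ and $z$, apply Claim~\ref{claim_four}, and read off a $312$-pattern; and for distant intervals you shift $z$ down by multiples of $d$ using the fact that the $U$-residues increase. You have also correctly isolated the one genuine difficulty, which the paper passes over in silence (its entire treatment of $z-x>d$ is the sentence that the inequality follows ``by using that $U$ consists of the increasing sequences''): the shift can land $z-md$ in the \emph{same} interval $[p_{i},q_{i}]$ as $x$, which happens exactly when $[p_{k},q_{k}]$ is the translate $[p_{i}+md,q_{i}+md]$. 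In that configuration one must show $g(x)<g(z'+d)$ for $x<z'$ lying in one indecomposable component, and when $z'+d-x>n$ the triple $x<z'<z'+d$ fits in no window, so Claim~\ref{claim_one} cannot be applied and neither of your two principal arguments is available.

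However, flagging this sub-case is not the same as proving it, and your proposal stops there; that is a genuine gap. Nor is it a removable formality: the claim really does depend on Conditions~\eqref{equation_extra_condition} and~\eqref{equation_extra_condition_2}, not merely on local $312$-avoidance, periodicity and Claims~\ref{claim_one}--\ref{claim_four}. For instance, take $d=n=3$ and the closed walk with edge patterns $3214$, $2143$, $2431$ repeating. It admits lifts $g$ reproducing every window pattern whose values at positions $0,1,\ldots,8$ are $3,\ 1,\ -10,\ 4,\ 2,\ -20,\ 5,\ 2.5,\ -30$ (extended to the left in any manner consistent with the walk): here $D=3\Zzz+2$, the non-${\bf D}$ intervals are $[0,1],[3,4],\ldots$, each enriched with $21$, all of Claims~\ref{claim_one}--\ref{claim_four} hold, and yet $g(0)=3>2=g(4)$, violating the present claim. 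Such a lift can never arise from the construction, because Condition~\eqref{equation_extra_condition_2}, applied when $g(0)$ is selected, forces $g(0)<g(4)$. So any complete proof must bring in \eqref{equation_extra_condition}/\eqref{equation_extra_condition_2} --- for example via Lemma~\ref{lemma_chain} combined with an induction on $z-x$ --- precisely in the sub-case you identified. In short: your route is the paper's route, and your diagnosis of where the danger lies is sharper than the paper's own one-line dismissal, but the proposal as written leaves that decisive step unproven.
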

First assume that $z-x \leq d$.  If there is no interval $[p_{j},q_{j}]$ between these two intervals ($i < j < k$)
that
is enriched with the symbol ${\bf D}$ then the inequality follows by the decomposition into indecomposable permutations in part (u) above.  If there is an interval $[p_{j},q_{j}]$ in between which is enriched with the symbol ${\bf D}$, then consider the pattern $\Pi(g(x), g(p_{j}), g(z))$.  Since $[p_{j},q_{j}]$ is enriched by ${\bf D}$ we have that $g(p_{j}) < g(x)$ and $g(p_{j}) < g(z)$.  Hence, if $g(x) > g(z)$, we obtain the pattern~$312$, a contradiction.  Finally, if $z-x > d$, we obtain the inequality by using that $U$ consists of the increasing sequences.

The last claim states that we do not lose information if we view the permutation enriching the interval $[p_{i},q_{i}]$ as a bijection on this interval.  The resulting composition, viewed as a cyclic composition with its enrichment, is the inverse image of the map $\Phi$.

When the set $D$ is empty, we need to be more careful to show that the sequence $g$ has a cut point.  We will use an argument similar to that in the second proof of Corollary~\ref{corollary_there_is_a_cut point}.  However, there is an added complication since all we know is that $g$ is locally $312$-avoiding.  By Condition~\eqref{equation_extra_condition} we have the following lemma.
\begin{lemma}
  Assume that $j < k$ and $g(j) > g(k)$.  Then there is a chain $j = j_{0} < j_{1} < \cdots < j_{L} = k$ such that $g(j_{0}) > g(j_{1}) > \cdots > g(j_{L})$ and $j_{h+1} - j_{h} \leq n$ for all indices $0 \leq h \leq j_{L}-1$.
\label{lemma_chain}
\end{lemma}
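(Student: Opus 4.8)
The plan is to prove the lemma by strong induction on the gap $k-j$. The base case is $k-j \le n$: here the single-step ``chain'' $j = j_0 < j_1 = k$ already works, since it is decreasing by hypothesis and its one gap is at most $n$. For the inductive step I assume $k-j > n$ and reduce to two strictly shorter inversions. Concretely, it suffices to produce a \emph{splitting index} $m$ with $j < m < k$ and $g(j) > g(m) > g(k)$: then both $(j,m)$ and $(m,k)$ are inversions with gap strictly smaller than $k-j$, the induction hypothesis supplies local decreasing chains for each, and concatenating them (they meet at $m$, and the values strictly decrease across the join) yields the desired chain for $(j,k)$. So the whole proof rests on locating such an $m$.

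To find $m$ I would exploit the maximality built into the construction, i.e.\ Condition~\eqref{equation_extra_condition} and its mirror~\eqref{equation_extra_condition_2}. The relevant fact about the construction order is that the initial block $[1,n+1]$ is filled first, step~(+) produces increasing positive indices, and step~(--) produces decreasing negative indices; hence, in the inductive range $k-j>n$ (where $j,k$ are not both in the initial block), the later-built of the two endpoints is produced by an extension step while the other endpoint is already present. If $k$ is built later, it is built by a positive step, and the upper bound $b$ in~\eqref{equation_extra_condition} equals $g(m)$ for some window-neighbor $m \in [k-n,k-1]$ with $g(m) > g(k)$; moreover~\eqref{equation_extra_condition} forces $g(k)$ to exceed every earlier value lying below $b$. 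Since $g(j)>g(k)$ and $j$ is present, $g(j)$ cannot lie below $b$, so $g(j) > b = g(m) > g(k)$, and because $k-j>n$ gives $j < k-n \le m$, this $m$ lies strictly between $j$ and $k$ and splits the inversion. If instead $j$ is built later, it is built by a negative step, and the symmetric bound $a$ in~\eqref{equation_extra_condition_2} supplies a window-neighbor $m \in [j+1,j+n]$ with $g(j) > g(m)$, while~\eqref{equation_extra_condition_2} forces $g(j)$ to lie below every present value exceeding $a$; since $g(k) < g(j)$ we get $g(k) < a = g(m) < g(j)$, and $k-j>n$ gives $m \le j+n < k$, so again $m$ splits the inversion.

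The main obstacle is exactly this splitting step: one must extract the correct local witness from the maximality conditions and verify it sits strictly between $j$ and $k$ in both index and value. Two small points need care. First, I must rule out the degenerate cases $b=\infty$ (resp.\ $a=-\infty$): if, say, $g(k)$ were the maximum of its window then~\eqref{equation_extra_condition} would force $g(k)$ above \emph{all} earlier values, in particular above the present value $g(j)$, contradicting $g(j)>g(k)$; hence $b$ is finite and the witness $m$ genuinely exists (symmetrically for $a$). Second, I must confirm the build-order dichotomy in the boundary situations (one endpoint in the initial block, or both endpoints built by the same type of extension); in each case the later-built endpoint is still an extension with the other endpoint present, so the relevant condition applies. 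Once $m$ is produced the induction closes immediately, and the resulting chain automatically satisfies all gap bounds $\le n$, since each local step goes to a window-neighbor at distance at most $n$ and the sub-chains inherit this bound from the induction hypothesis.
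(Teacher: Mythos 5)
Your proposal is correct and runs on the same engine as the paper's own proof: the maximality in Condition~\eqref{equation_extra_condition} forces the finite bound $b=g(m)$, a window-neighbor of the later-built endpoint, to satisfy $g(j)>g(m)>g(k)$, which the paper exploits by iterating backward from $k$ (its $j_{L-1}$ is exactly your splitting index $m$), while you package the identical recursion as a strong induction on $k-j$. If anything, your write-up is more careful than the paper's, which only treats the case where the value was assigned by a (+) step and tacitly assumes $g(j)$ was already present; your build-order dichotomy, the symmetric (--) case via Condition~\eqref{equation_extra_condition_2}, and the exclusion of the degenerate bounds $b=\infty$, $a=-\infty$ fill in precisely what the paper leaves implicit.
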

\begin{proof}
  When we selected the value of $g(j_{L})$, we picked this value in an interval $(a,b)$ where $b$ was one of the values from the list $g(j_{L}-n), \ldots, g(j_{L}-2), g(j_{L}-1)$.  Hence, let $j_{L - 1}$ be the index such that $g(j_{L - 1}) = b$.

Assume that $g(j) < g(j_{L - 1})$.
Condition~\eqref{equation_extra_condition} states that we picked $g(j_{L})$ as large as possible in the interval $(a,b)$. Hence, the assumption $g(j) < g(j_{L - 1})$ implies that $g(j) < g(j_{L})$, a contradiction.  We conclude that $g(j) > g(j_{L - 1})$. By iterating this argument we obtain the chain.
\end{proof}

Now the argument is the same as in the second proof of Corollary~\ref{corollary_there_is_a_cut point}, except in the case when we use the $312$-avoidance.  That is, we have picked $(i,g(i))$ as a minimal element in the poset~$P$ and $(j,g(j))$ to be an element larger than or equal to the minimal element $(i,g(i))$ maximizing the second coordinate.  Observe that $i-j \leq d$.  The local $312$-avoidance condition implies that the poset order between $(i,g(i))$ and $(j,g(j))$ is a chain.  That is, we have the string of inequalities $g(j) > g(j+1) > \cdots > g(i-1) > g(i)$.

The remaining case is to show that there is no index $k$ such that $i < k$ and $g(i) < g(k) < g(j)$.  First pick $j^{\prime}$ in the interval $[j,i-1]$ such that $g(j^{\prime}+1) < g(k) < g(j^{\prime})$.  Next use Lemma~\ref{lemma_chain} to pick the first element of the chain $j_{1}^{\prime}$ such that $j^{\prime} < j_{1}^{\prime} \leq j^{\prime} + n$ and $g(j^{\prime}+1) < g(j_{1}^{\prime}) < g(j^{\prime})$.  However, this is a $312$-pattern, contradicting the assumption that there is such a $k$.  Hence, we conclude that the sequence $g$ has a cut point.

Let the cut point be $p_{1}-1$.  Consider the composition of the interval $[p_{1}, p_{1}+d-1]$ consisting of one part.  That is, this part is the interval~$[p_{1}, p_{1}+d-1]$.  Now, in a way similar to part (u) above, we decompose this interval into smaller intervals, each enriched with an indecomposable $312$-avoiding permutations using the permutation $\Pi(g(p_{1}), g(p_{1}+1), \ldots, g(p_{1}+d-1))$.  This completes the inverse of $\Phi$ in the case when the set $D$ is empty.

\section{Open problems}\label{sec-open}

In conclusion, we list a few open problems.

\begin{question}
  The sequence $\frac{1}{d} \sum_{e|d} \mu\left(\ffrac{d}{e}\right) \binom{2e}{e}$ from Theorem~\ref{theorem_number_of_cycles} appears in the On-Line Encyclopedia of Integer Sequences \cite{oeis} as sequence A060165.  It has been previously studied by Puri and Ward~\cite{Puri_Ward_I,Puri_Ward_II}.  When $q$ is a prime power, the number of monic irreducible polynomials of degree $d$ in $GF(q)[x]$ is given by
Equation~\eqref{equation_de_Bruijn}.
Is there a similar algebraic interpretation for the numbers occurring in Theorem~\ref{theorem_number_of_cycles}?
\end{question}

\begin{question}
  Can the number of $d$-cycles in the graph $G(n,321)$ be determined?  Equivalently, what is the number of closed walks in $G(n,321)$ of length $d$?  Of course, the same question can be asked for any set of patterns of length 3 or more, as well as for the entire graph $G(n)$.
\end{question}

\begin{question}
  One of the earliest results on De Bruijn graphs is that the number of complete cycles, also known as Eulerian cycles, is given by
$$    \frac{(q!)^{q^{n-1}}}{q^{n}}. $$
This result goes back to 1894 by Flye Sainte-Marie~\cite{Flye} in the case when $q=2$ and the general case to van Aardenne-Ehrenfest and De Bruijn~\cite{AT_B}.  Is there an analogous result for the graph $G(n)$ of overlapping permutations?  It seems, from the small examples we have studied, that the number of complete cycles in the graph of overlapping permutations has small prime factors, which gives hope that there is an explicit formula.
\end{question}

\begin{question}
Observe that $G(n,312)$ does not have any complete cycles since there are vertices with different out- and in-degrees.
Hence, it is natural to ask: For which sets $S$ of patterns do all vertices in the graph $G(n,S)$ have the same out- and in-degree? Also, what is the number of complete cycles in these graphs?
\end{question}

\begin{question}
  Lastly, it would be interesting to find bijective proofs of Lemmas~\ref{lemma_central_binomial} and~\ref{lemma_cyclic_composition_Catalan}.
\end{question}

\section*{Acknowledgments}

The authors thank the two referees for their
helpful comments.
The first author was partially supported by National Science Foundation grant DMS~0902063 and National Security Agency grant~H98230-13-1-0280.  The last author was supported by grant no.\ 090038013 from the Icelandic Research Fund.

\newcommand{\journal}[6]{{\sc #1}, {#2}, {\it #3}, {\bf #4} (#5), #6.}
\newcommand{\bookf}[5]{{\sc #1,} ``#2,'' #3, #4, #5.}

\end{document}